\setlist[enumerate,1]{label={\normalfont(\roman*)}}\usepackage[margin=1.1in]{geometry}\addtolength{\oddsidemargin}{-.3in}\addtolength{\evensidemargin}{-.3in}\setlength{\textwidth}{6.2in}\setlength{\unitlength}{1mm}\setcounter{tocdepth}{2}\theoremstyle{plain}\newtheorem{theoreme}{Theorème}[section]\newtheorem{coro}[theoreme]{Corollaire}\newtheorem{lemme}[theoreme]{Lemme}\newtheorem{prop}[theoreme]{Proposition}\theoremstyle{definition}\theoremstyle{remark}\newtheorem{remarque}[theoreme]{Remarque}\newcommand\numberthis{\stepcounter{equation}\tag{\theequation}}\newcommand{\ssum}[1]{\sum_{\substack{#1}}}\newcommand{\e}{{\rm e}}\newcommand{\dd}{{\rm d}}\newcommand{\ee}{{\varepsilon}}\newcommand{\R}{{\mathbb R}}\newcommand{\bbC}{{\mathbb C}}\newcommand{\bbE}{{\mathbb E}}\newcommand{\bbN}{{\mathbb N}}\newcommand{\bbP}{{\mathbb P}}\newcommand{\PP}{{\mathbb P}}\newcommand{\bbR}{{\mathbb R}}\newcommand{\bbZ}{{\mathbb Z}}\newcommand{\bfUn}{{\mathbf 1}}\newcommand{\cA}{{\mathcal A}}\newcommand{\cU}{{\mathcal U}}\newcommand{\Yu}{{\Upsilon_1}}\newcommand{\Yd}{{\Upsilon_2}}\newcommand{\Yt}{{\Upsilon_3}}\newcommand{\Yq}{{\Upsilon_4}}\newcommand{\Yc}{{\Upsilon_5}}\newcommand{\Ys}{{\Upsilon_6}}\newcommand{\Ysp}{{\Upsilon_7}}\newcommand{\gR}{{\mathfrak R}}\newcommand{\GG}{{\mathscr G}}\newcommand{\gT}{\mathfrak T}\newcommand{\gS}{\mathfrak S}\renewcommand{\leq}{\leqslant}\renewcommand{\geq}{\geqslant}\newcommand{\vth}{{\vartheta}}\newcommand{\vphi}{{\varphi}}\renewcommand{\rho}{\varrho}\newcommand{\omegat}{\omega}\newcommand{\infra}{\textit{infra}}\def\sfrac#1#2{{\scriptstyle\frac#1#2}}\newcommand{\ubar}{{\bar u}}\newcommand{\bsigma}{{\bar \sigma}}\newcommand{\floor}[1]{{\left\lfloor {#1} \right\rfloor}}\renewcommand\Re{\operatorname{\mathfrak{Re}}}\newcommand{\Prob}{\bbP}\newcommand{\mes}{\nu_{x, y}}\newcommand{\1}{{\bf 1}}\def\pnu{p^\nu}\def\NO#1{\left\Vert#1\right\Vert}\def\abs#1{\left|#1\right|}\def\sset{\smallsetminus}\newcommand{\dm}{\tfrac12}\newcommand{\dsp}{\displaystyle}\newcounter{constantes}
\newcommand{\cte}[2][]{
\ifthenelse{\expandafter\isundefined\csname constante#2\endcsname}
{
  \stepcounter{constantes}
  \expandafter\xdef\csname constante#2\endcsname{c_{\theconstantes \ifstrempty{#1}{}{, #1}}}
}{}
\csname constante#2\endcsname
}
\def\cD{{\mathscr D}}\newcommand{\cE}{{\mathscr E}}\DeclareMathOperator{\argmin}{argmin}\numberwithin{equation}{section}
\title{Lois de r\'{e}partition des diviseurs des entiers friables}
\author{Sary Drappeau \& Gérald Tenenbaum}
\address{\hspace{-0.3in} Université d'Aix-Marseille, CNRS, Centrale Marseille \\ I2M UMR 7373 \\ 13453 Marseille, France}
\email{sary-aurelien.drappeau@univ-amu.fr}
\address{\hspace{-0.3in} Institut Élie Cartan, Université de Lorraine \\ BP 70239 \\  54506 Vand\oe{}uvre-lès-Nancy Cedex, France}
\email{gerald.tenenbaum@univ-lorraine.fr}
\keywords{distribution of divisors, friable integers, saddle-point method, additive functions}
\subjclass[2010]{Primary 11N25; Secondary 11N37, 11N60}
\begin{document}

\begin{otherlanguage}{british}
\begin{abstract}
According to a general probabilistic principle, the natural divisors of friable integers (i.e.~free of large prime factors) should normally present a Gaussian distribution. We show that this indeed is the case with conditional density tending to 1 as soon as the standard necessary conditions are met. Furthermore, we provide explicit, essentially optimal estimates for the decay of the involved  error terms. The size of the exceptional set is sufficiently small to enable recovery of the average behaviour in the same optimal range. Our argument combines the saddle-point method with   new large deviations estimates for the distribution of certain additive functions.
\end{abstract}
\end{otherlanguage}

\maketitle

\thispagestyle{empty}

\section{Introduction}

 La théorie analytique des nombres  explore de manière privilégiée les rapports entre les structures additive et multiplicative des entiers. À ce titre, l'étude de la répartition des diviseurs est emblématique du domaine. Elle a conduit à des avancées significatives et permis de mettre des notions pertinentes en lumière. Outre l'article de synthèse~\cite{Tenenbaum-ErdosUnconv} et la monographie~\cite{HallTenenbaum},  mentionnons les travaux relatifs à la fonction~$\Delta$ de Hooley (voir en particulier~\cite{Hooley, T-DeltaSurvey}) et certaines de ses applications~\cite{BT-Manin}, et les progrès successifs~\cite{Tenenbaum-H, Ford} ayant abouti à la détermination de l'ordre de grandeur dans le problème de la table de multiplication d'Erd\H os~\cite{ErdosUnconv}.
\par 
Soit $n$ un  entier naturel et $\tau(n)$ le nombre de ses diviseurs. Le problème de la répartition des diviseurs de~$n$ est équivalent à l'étude de la variable aléatoire $D_n$ définie par
$$ \PP(D_n = \log d)=\frac1{\tau(n)}\qquad (d\mid n). $$
 La symétrie des diviseurs de~$n$ autour de~$\sqrt{n}$ implique que~$D_n$ est répartie de façon symétrique par rapport à la moyenne~$\bbE(D_n)=\tfrac12\log n$. L'ordre moyen de $D_n$ a été déterminé dans~\cite{DDT}: nous avons 
 $$ \frac1x\sum_{n \leq x} \Prob\big(D_n \leq v\log n\big) = \frac2\pi\arcsin\sqrt{v} + O\bigg(\frac1{\sqrt{\log x}}\bigg)\qquad (x\geqslant 2) $$
uniformément pour~$v\in[0, 1]$. Des généralisations sont proposées et étudiées dans~\cite{BM-DDT}, \cite{BM15}, \cite{GT-add}, \cite{BT16}. \par 
La régularité du comportement en moyenne occulte cependant d'importantes fluctuations du comportement normal: selon le principe d'incertitude établi dans~\cite{Tenenbaum-Div2}, toute suite d'entiers~$\cA\subset \bbN$ pour laquelle~$(D_n/\log n)_{n\in\cA}$ converge en loi est de densité naturelle nulle, autrement dit
$$ |[1, x]\cap \cA| = o(x) \qquad (x\to\infty). $$
\par
Ce phénomène reflète l'influence  des grands facteurs premiers de~$n$ sur la loi de $D_n$. Dans le présent travail, nous examinons le cas où  l'entier~$n$ est astreint à ne posséder que des petits facteurs premiers. 
\par 
Désignons par~$P(n)$ le plus grand facteur premier d'un entier naturel~$n$, avec la convention~$P(1)=1$. On dit que $n$ est~$y$-friable si $P(n)\leqslant y$ et l'on note
$$ S(x, y) := \{n\leq x:  P(n) \leq y\}, \quad\Psi(x, y) := |S(x, y)|. $$
Les quantités
$$ u := (\log x)/\log y, \qquad \ubar := \min\{ u, \pi(y)\}=\min(y,\log x)/\log y\qquad (x\geqslant y\geqslant 2) $$
interviennent classiquement dans l'étude de la répartition des entiers friables. Il découle ainsi des travaux de
Hildebrand~\cite{Hildebrand} et Hildebrand-Tenenbaum \cite{HT} que l'on a, pour tout $\varepsilon>0$ fixé,
$$ \Psi(x, y) = x\varrho(u)\exp\bigg\{O\bigg(\frac{\log 2u}{\log y}+\frac{u}{
\e^{(\log y)^{3/5-\varepsilon}}}
\bigg)\bigg\} \qquad \big((\log x)^{1+\varepsilon}\leq y\leq x\big)$$
où $\varrho$ désigne  la fonction de Dickman, définie comme la solution continue sur~$\bbR^+$ de l'équation fonctionnelle
$$ \varrho(u-1) + u\varrho'(u) = 0 \qquad (u>1) $$
avec la condition initiale~$\varrho(u)=1$ ($0\leqslant u\leq 1$). La fonction $\varrho$ est positive, strictement décroissante sur $[1,+\infty[$ et vérifie $$ \varrho(u) = u^{-u + o(u)} \qquad (u\to\infty). $$
Ainsi~$\varrho(u)\gg 1$ lorsque~$u$ est borné, ce qui implique, d'après le principe d'incertitude mentionné plus haut, que, dans ce cas, la variable  aléatoire~$(D_n/\log n)_{n\in \cA}$ ne converge sur aucun sous-ensemble~$\cA$ de $S(x, y)$ de densité positive.
\par 
Choisir un diviseur aléatoire de~$n$ revient à choisir, indépendamment pour chaque facteur $p^\nu\|n$, un exposant~\hbox{$j\in\{0, \dotsc, \nu\}$} avec équiprobabilité. Nous avons donc la décomposition canonique
\begin{equation}
D_n = \sum_{p^\nu \| n} D_{p^\nu}\label{eq:Dn-xi}
\end{equation}
où les variables~$(D_{p^\nu})$ sont indépendantes, et~$D_{p^\nu}$ est de loi uniforme sur~$\{j \log p:0\leqslant j\leqslant \nu\}$.
\par 
Nous introduisons les moments cumulés centrés
\begin{equation}
m_{k, n} := \sum_{p^\nu\|n}\bbE\Big(\big\{D_{p^\nu} - \dm\nu \log p\big\}^k\Big)\qquad (k\geqslant 1),\label{eq:def-Mk}
\end{equation}
et l'écart-type $\sigma_n$, défini par
$$ \sigma_n^2 =m_{2, n}= \tfrac1{12}\sum_{p^\nu \| n} \nu(\nu+2)(\log p)^2. $$
La symétrie de~$D_{p^\nu}$ autour de sa moyenne implique~$m_{k, n}=0$ lorsque~$k$ est impair.
\par 
 En cohérence avec la décomposition~\eqref{eq:Dn-xi},  nous obtenons une estimation de type limite centrale pour la variable~$D_n$. Il est connu qu'un tel résultat dépend d'une minoration de la variance~$\sigma_n$, associée à une majoration d'un moment~$m_{2k, n}$, $k\geq 2$. Nous posons
\begin{equation}
\Phi(z) :=\frac{1}{\sqrt{2\pi}} \int_z^\infty\e^{-t^2/2} \dd t, \qquad w_n := \frac{\sigma_n^4}{m_{4, n}} \qquad (z\in\R, n\neq 1),\label{eq:defs-Phi-wn}
\end{equation}
et convenons que~$w_1 := 1$. Nous avons ainsi~$w_n\geq \tfrac59$ pour tout entier $n\geqslant 1$ --- cf. formule~\eqref{eq:mkn-mino-expl} \textit{infra}.
\begin{theoreme}\label{thm:TCL-cR}
Il existe une constante absolue~$c>0$, telle que, pour $2\leq y \leq x$, la relation
\begin{equation}
\Prob\Big(D_n \geq \tfrac12\log n + z\sigma_n\Big) = \Phi(z) + O\bigg(\frac{1+z^4}{w_n}\Phi(|z|)\bigg) \qquad \Big(z\ll w_n^{1/4}\Big) \label{eq:TCL-cR}
\end{equation}
ait lieu uniformément tous les entiers $n$ de~$S(x, y)$ sauf au plus~$\ll\e^{-c\sqrt{\ubar}}\Psi(x, y)$ exceptions. 
\end{theoreme}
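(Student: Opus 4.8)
The plan is to establish a central limit theorem for the random variable $D_n$ by combining the canonical decomposition~\eqref{eq:Dn-xi} with a Berry--Esseen type argument, but the delicate point is that the error bound is required to be uniform in $z$ up to $z\ll w_n^{1/4}$, with the Gaussian tail $\Phi(|z|)$ appearing as a genuine multiplicative factor; this is a moderate deviations statement, not merely a Berry--Esseen estimate at the scale of $\sigma_n$. First, I would fix an exponential weight: for a parameter $\lambda\in\bbR$, introduce the tilted random variable and study the Laplace transform
$$ L_n(\lambda) := \bbE\bigl(\e^{\lambda(D_n-\tfrac12\log n)/\sigma_n}\bigr) = \prod_{p^\nu\|n} \frac{1}{\nu+1}\sum_{j=0}^\nu \e^{\lambda(j-\nu/2)\log p/\sigma_n}, $$
which factors over the prime powers by independence. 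Expanding $\log L_n(\lambda)$ in powers of $\lambda$ using the centred cumulants, the quadratic term is $\tfrac12\lambda^2$ by the normalisation $\sigma_n^2=m_{2,n}$, the cubic term vanishes by symmetry of each $D_{p^\nu}$, and the quartic and higher terms are controlled by $m_{4,n}/\sigma_n^4 = 1/w_n$ and, more generally, by $m_{2k,n}/\sigma_n^{2k}$. The key technical lemma to prove here is a bound of the shape $m_{2k,n}\ll (C k^2 m_{2,n}/\log\log\cdots)^k$ or, more usefully, $m_{2k,n}\le (2k)!\,m_{2,n}^{k}/(\text{something})$ reflecting that each $D_{p^\nu}-\tfrac12\nu\log p$ is bounded by $\tfrac12\nu\log p\le \tfrac12\nu\log y$; this lets one show $\log L_n(\lambda) = \tfrac12\lambda^2 + O\bigl((1+\lambda^4)/w_n\bigr)$ in the range $|\lambda|\ll w_n^{1/4}$.

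Next, I would convert this control of the Laplace transform into the tail estimate~\eqref{eq:TCL-cR} by a standard exponential Chebyshev / saddle-point argument: choosing $\lambda = z$ (the approximate saddle point for the Gaussian) gives the upper bound $\Prob(D_n\ge \tfrac12\log n+z\sigma_n)\ll \e^{-z^2/2+O((1+z^4)/w_n)}$, and the matching lower bound together with the precise constant $\Phi(z)$ (rather than just $\e^{-z^2/2}$) requires a local limit refinement: one writes the probability as a contour integral of $L_n(s)$ over a vertical line through the saddle $\Re s = z$, and compares with the corresponding integral for the exact Gaussian, the difference being governed by $\sup_{t}|{L_n(z+it)}/{L_n(z)} - \e^{-t^2/2}|$ weighted suitably, which is again $O((1+z^4)/w_n)$ after the cumulant expansion. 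This is the part where one must be careful that the error term is $(1+z^4)/w_n$ times $\Phi(|z|)$ and not merely additive; the fourth power of $z$ is exactly what comes out of the quartic cumulant term $\lambda^4 m_{4,n}/(24\sigma_n^4)$ evaluated at $\lambda\asymp z$.

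The remaining --- and genuinely substantive --- ingredient is the control of the exceptional set: the estimate~\eqref{eq:TCL-cR} is vacuous unless $w_n\to\infty$, equivalently unless $\sigma_n$ is large and $m_{4,n}/\sigma_n^4$ is small, so I must show that $w_n$ is large for all but $\ll \e^{-c\sqrt{\ubar}}\Psi(x,y)$ integers $n\in S(x,y)$. Since $w_n = \sigma_n^4/m_{4,n}$ and both $\sigma_n^2 = \tfrac1{12}\sum_{p^\nu\|n}\nu(\nu+2)(\log p)^2$ and $m_{4,n}$ are additive functions of $n$, this is a large-deviations problem for additive functions on friable integers --- precisely the "new large deviations estimates" announced in the abstract. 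The plan is to bound, via Rankin's trick adapted to the friable setting (i.e. working with the Dirichlet series $\sum_{n\in S(x,y)} n^{-s}$ and multiplicative perturbations thereof, analysed by the Hildebrand--Tenenbaum saddle-point method quoted above), the number of $n\in S(x,y)$ for which $\sigma_n^2$ is abnormally small; typically $\sigma_n^2\gg (\log y)^2\cdot(\text{number of "medium" prime factors})$, and the count of friable $n$ with few such prime factors decays like $\e^{-c\sqrt{\ubar}}\Psi(x,y)$ because $\ubar=\min(y,\log x)/\log y$ measures the expected number of available prime factors. The main obstacle will be making this large-deviation count uniform in the full range $2\le y\le x$ (including the very friable regime where $\pi(y)<u$), which forces one to work with $\ubar$ rather than $u$ and to handle the small-$y$ case by hand; simultaneously one needs $m_{4,n}$ not too large on the same exceptional-set complement, which is a comparatively easier upper-bound sieve/Rankin estimate. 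Once the exceptional set is under control, combining it with the saddle-point tail estimate from the first two paragraphs yields the theorem.
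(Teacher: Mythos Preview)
Your overall architecture is right: Perron/Laplace transform, saddle at $\beta\approx z/\sigma_n$, Taylor expansion of $\log Z_n$ near the saddle, and large-deviation control of the additive functions $m_{2,n}$, $m_{4,n}$ to bound the exceptional set. But there is a genuine gap in the second paragraph, precisely at the sentence ``the difference being governed by $\sup_t|L_n(z+it)/L_n(z)-\e^{-t^2/2}|$ weighted suitably, which is again $O((1+z^4)/w_n)$ after the cumulant expansion.''

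The cumulant (Taylor) expansion of $\log L_n(z+it)$ only controls the integrand for $|t|$ in a window of size roughly $w_n^{1/4}$; outside that window the Gaussian $\e^{-t^2/2}$ is negligible but $|L_n(z+it)/L_n(z)|$ need not be, and the contribution of those $t$ to the Perron integral is exactly what forces the \emph{weaker} error term $(1+z)/\sqrt{w_n}$ in the standard large-deviation theorems (the paper says this explicitly just after Corollary~1.3). To reach the stronger bound $1/w_n$ one needs, for the given $n$, the decay
\[
\int_{T_\delta}^{T}\Big|\frac{Z_n(\beta_n+i\tau)}{Z_n(\beta_n)}\Big|\,\frac{\dd\tau}{\beta_n+\tau}\ \ll\ \frac{1}{(z+1)w_n},
\]
which is a non-lattice condition on the multiset $\{\log p:p\mid n\}$: one must bound sums of the form $\sum_{p\mid n}\|\tau\log p/2\pi\|^4$ from below (Lemma~4.2, then Lemmas~4.3--4.4 and Corollary~4.5 in the paper). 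This trigonometric-sum input is the ``part significative de notre analyse'' flagged in the introduction, and your plan omits it entirely.

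This omission also affects your description of the exceptional set. Controlling $\sigma_n^2$ and $m_{4,n}$ by Rankin-type large deviations (your third paragraph) produces the set $\Yd$ of the paper and accounts for one $\e^{-c\sqrt{\ubar}}$ loss, but the vertical-line decay above does \emph{not} hold for every $n\in\Yd$: it is established only on average over $n\in S(x,y)$ (via a first-moment argument using the multiplicative bound~\eqref{eq:majo-G-1}), and one must throw away a further set of $n$ of measure $\ll\e^{-c\ubar/(\log 2\ubar)^4}$ where it fails (the set $\Yt$). The final exceptional set is $S(x,y)\setminus(\Yd\cap\Yt)$. Without the trigonometric-sum step your argument would at best yield the error $(1+z)/\sqrt{w_n}+(1+z^4)/w_n$ on $\Yd$, not the claimed $(1+z^4)/w_n$.
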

Nous déduisons de ce résultat et de la minoration de~$w_n$  obtenue à la Proposition~\ref{prop:taille-cR} {\it infra} l'évaluation explicite
suivante.
\begin{coro}\label{thm:pcp-coro}
Soit~$c>0$. La relation
\begin{equation}
\Prob\Big(D_n \geq \tfrac12\log n + z\sigma_n\Big) = \Phi(z) + O\bigg(\frac{1+z^4}{\ubar}\Phi(|z|)\bigg) \qquad\Big(z \ll \ubar^{1/4}\Big)\label{eq:pcp}
\end{equation}
a lieu uniformément pour $x\geqslant y\geqslant 2$ et tous les entiers $n$ de~$S(x, y)$ sauf au plus~$\ll \e^{-c\ubar^{1/4}}\Psi(x, y)$ exceptions.
\end{coro}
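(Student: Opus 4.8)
Le corollaire se d\'eduit directement du Th\'eor\`eme~\ref{thm:TCL-cR} et de la minoration de~$w_n$ \'etablie \`a la Proposition~\ref{prop:taille-cR}. Le plan est de fixer $c>0$, d'\'ecarter d'abord le r\'egime trivial, puis de transf\'erer l'estimation~\eqref{eq:TCL-cR} en~\eqref{eq:pcp} sur le compl\'ementaire d'une r\'eunion de deux ensembles exceptionnels. Je commencerais par remarquer que si $\ubar$ n'exc\`ede pas une constante $B=B(c)$ convenable, alors $\e^{-c\ubar^{1/4}}\gg_c 1$, donc $\Psi(x,y)\ll_c \e^{-c\ubar^{1/4}}\Psi(x,y)$, et~\eqref{eq:pcp} est v\'erifi\'ee trivialement en d\'eclarant exceptionnels tous les entiers de $S(x,y)$ --- ce qui couvre en particulier~$n=1$ et les quelques entiers d\'eg\'en\'er\'es. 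On peut donc supposer $\ubar>B$, et l'on choisira $B=B(c)$ assez grande pour que cela force $\e^{-c_0\sqrt{\ubar}}\leqslant \e^{-c\ubar^{1/4}}$, o\`u $c_0$ d\'esigne la constante absolue figurant dans la borne exceptionnelle du Th\'eor\`eme~\ref{thm:TCL-cR}.

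Ensuite, j'invoquerais la Proposition~\ref{prop:taille-cR} sous sa forme quantitative : pour le $c$ fix\'e, elle fournit une constante $\kappa=\kappa(c)>0$ et un ensemble $\cE_1\subset S(x,y)$, avec $|\cE_1|\ll_c \e^{-c\ubar^{1/4}}\Psi(x,y)$, tels que $w_n\geqslant \kappa\ubar$ pour tout $n\in S(x,y)\setminus\cE_1$. Le Th\'eor\`eme~\ref{thm:TCL-cR} produit de son c\^ot\'e un ensemble $\cE_2\subset S(x,y)$, de cardinal $\ll \e^{-c_0\sqrt{\ubar}}\Psi(x,y)\leqslant \e^{-c\ubar^{1/4}}\Psi(x,y)$ (par le choix de~$B$), en dehors duquel~\eqref{eq:TCL-cR} a lieu pour $|z|\leqslant A_0 w_n^{1/4}$, $A_0$ constante absolue. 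Pour $n\in S(x,y)\setminus(\cE_1\cup\cE_2)$, la minoration $w_n\geqslant\kappa\ubar$ entra\^ine $w_n^{1/4}\geqslant\kappa^{1/4}\ubar^{1/4}$, de sorte que le domaine $|z|\leqslant A_1\ubar^{1/4}$ de~\eqref{eq:pcp} est inclus dans celui de~\eqref{eq:TCL-cR} d\`es que $A_1\leqslant A_0\kappa^{1/4}$ (ce qui fixe la constante implicite du symbole $z\ll\ubar^{1/4}$) ; de plus $(1+z^4)/w_n\leqslant \kappa^{-1}(1+z^4)/\ubar$, si bien que le terme d'erreur de~\eqref{eq:TCL-cR} est $O_c\big((1+z^4)\Phi(|z|)/\ubar\big)$, c'est-\`a-dire celui de~\eqref{eq:pcp}. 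Il ne resterait plus qu'\`a noter $|\cE_1\cup\cE_2|\leqslant|\cE_1|+|\cE_2|\ll_c \e^{-c\ubar^{1/4}}\Psi(x,y)$ pour conclure.

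Je ne m'attends \`a aucun obstacle v\'eritable dans cette d\'eduction : toute la substance du corollaire r\'eside dans le Th\'eor\`eme~\ref{thm:TCL-cR} et la Proposition~\ref{prop:taille-cR}. Le seul point r\'eclamant un minimum de soin est le suivi de la d\'ependance en~$c$ des constantes implicites ; en particulier, l'obtention d'un ensemble exceptionnel de cardinal $\ll_c \e^{-c\ubar^{1/4}}\Psi(x,y)$ pour \emph{tout} $c>0$ exige d'employer la forme quantitative, uniforme en~$c$, de la minoration de~$w_n$, et pas seulement une minoration valable hors d'un ensemble de cardinal $\ll \e^{-c'\ubar^{1/4}}\Psi(x,y)$ pour une constante $c'$ fix\'ee.
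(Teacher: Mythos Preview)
Your proposal is correct and follows exactly the route the paper indicates: the paper simply asserts that the corollary is an immediate consequence of Th\'eor\`eme~\ref{thm:TCL-cR} combined with Proposition~\ref{prop:taille-cR}, and you have supplied the details of that deduction (handling the trivial regime~$\ubar\ll_c 1$, choosing~$Z\asymp_c \ubar^{1/4}$ in the proposition, and intersecting the two exceptional sets). Your remark on tracking the $c$-dependence of the constants is well taken and is precisely the small amount of care the paper's one-line justification leaves implicit.
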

Dans ce dernier énoncé, le terme d'erreur et l'uniformité en~$z$ correspondent au domaine naturel de l'approximation gaussienne et sont donc conjecturalement optimaux. Au prix d'un affaiblissement du terme d'erreur et d'une restriction du domaine d'uniformité en $z$, il est toutefois possible d'étendre le champ de validité de l'estimation.
\begin{coro}
\label{thm:pcp}
Il existe une constante absolue $c>0$ telle que, sous les conditions~$2\leq y \leq x$, $1 \leq Z \leq \ubar^{1/4}$, et $z \ll Z$, la relation
$$ \Prob\Big(D_n \geq \tfrac12\log n + z\sigma_n\Big) = \Phi(z) + O\bigg(\frac{1+z^4}{Z^4}\Phi(|z|)\bigg) $$
ait lieu uniformément pour tous les entiers~$n$ de~$S(x, y)$ sauf au plus~$\ll\e^{-c\sqrt{\ubar}/Z}\Psi(x, y)$ exceptions.
\end{coro}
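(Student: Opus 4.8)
The plan is to obtain Corollary~\ref{thm:pcp} by interpolating between Theorem~\ref{thm:TCL-cR} and the large-deviation lower bound for~$w_n$ established in Proposition~\ref{prop:taille-cR}; essentially all the content sits in the latter, and the deduction itself is short. The precise input I would use is an estimate of the form
\[
\card\bigl\{n\in S(x,y):w_n<c_0Z^4\bigr\}\ll\e^{-c\sqrt{\ubar}/Z}\,\Psi(x,y)\qquad(1\leq Z\leq\ubar^{1/4}),
\]
for suitable absolute $c_0,c>0$ --- this is Proposition~\ref{prop:taille-cR}, whose special case $Z=\ubar^{1/4}$ (giving $w_n\gg\ubar$ off $\ll\e^{-c\ubar^{1/4}}\Psi(x,y)$ integers) is what yields Corollary~\ref{thm:pcp-coro}.

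Granting this, fix $Z\in[1,\ubar^{1/4}]$ and let $\cE_1$ denote the exceptional set of Theorem~\ref{thm:TCL-cR}. Since $Z\geq1$ we have $|\cE_1|\ll\e^{-c\sqrt{\ubar}}\Psi(x,y)\leq\e^{-c\sqrt{\ubar}/Z}\Psi(x,y)$, and the set $\cE_2:=\{n\in S(x,y):w_n<c_0Z^4\}$ satisfies $|\cE_2|\ll\e^{-c\sqrt{\ubar}/Z}\Psi(x,y)$ by the displayed bound; hence $|\cE_1\cup\cE_2|\ll\e^{-c\sqrt{\ubar}/Z}\Psi(x,y)$. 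For $n\in S(x,y)\setminus(\cE_1\cup\cE_2)$ one has $w_n\geq c_0Z^4$, so $w_n^{1/4}\gg Z$; thus the hypothesis $z\ll Z$ of the corollary forces $z\ll w_n^{1/4}$, placing $z$ in the range of validity of~\eqref{eq:TCL-cR} for that~$n$. Invoking Theorem~\ref{thm:TCL-cR} and then bounding $1/w_n\leq(c_0Z^4)^{-1}$ in the error term gives
\[
\Prob\Bigl(D_n\geq\tfrac12\log n+z\sigma_n\Bigr)=\Phi(z)+O\biggl(\frac{1+z^4}{w_n}\Phi(|z|)\biggr)=\Phi(z)+O\biggl(\frac{1+z^4}{Z^4}\Phi(|z|)\biggr),
\]
uniformly in the stated range; taking the absolute constant to be the smaller of those furnished by Theorem~\ref{thm:TCL-cR} and Proposition~\ref{prop:taille-cR} completes the argument.

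The \emph{real obstacle} is therefore the uniform large-deviation bound above, i.e.\ showing that $w_n=\sigma_n^4/m_{4,n}$ falls below $Z^4$ only on a set of $y$-friable integers $\leq x$ of relative size $\ll\e^{-c\sqrt{\ubar}/Z}$. Heuristically $w_n$ quantifies an ``effective number of prime factors'' of~$n$ (for squarefree~$n$, Cauchy--Schwarz gives $w_n\leq\omega(n)$, with near-equality when the prime divisors are of comparable size), so one is really counting, sharply, the friable integers whose prime factorisation is abnormally sparse or unbalanced; this is where the additive-function large-deviation machinery announced in the abstract enters, and it is the delicate part. The constraints $1\leq Z\leq\ubar^{1/4}$ and $z\ll Z$ are precisely those for which the trade-off is non-vacuous ($Z=1$ being degenerate, the error term then matching the main term) and compatible with the saddle-point hypotheses underpinning Theorem~\ref{thm:TCL-cR}.
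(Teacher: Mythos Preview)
Your proposal is correct and follows exactly the approach indicated in the paper: the corollary is stated as an immediate consequence of Theorem~\ref{thm:TCL-cR} combined with Proposition~\ref{prop:taille-cR}, and your deduction via the union $\cE_1\cup\cE_2$ and the substitution $1/w_n\ll 1/Z^4$ is precisely what is meant. Your identification of Proposition~\ref{prop:taille-cR} as the substantive ingredient is also accurate.
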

\par 
La validité des deux énoncés précédents résulte immédiatement du Théorème~\ref{thm:TCL-cR} et de la proposition suivante, qui est elle-même conséquence du Corollaire \ref{coro:tailles-Mk} {\it infra}.
\begin{prop}\label{prop:taille-cR}
Il existe une constante absolue $c>0$ telle que, sous les conditions~$2\leq y \leq x$ et~$0 \leq Z \leq c\ubar^{1/4}$, l'inégalité
$$ w_n \geq Z^4 $$
ait lieu pour tous les entiers~$n$ de~$S(x, y)$ sauf au plus $\ll\e^{-c\sqrt{\ubar}/(Z+1)}\Psi(x, y)$ exceptions.
\end{prop}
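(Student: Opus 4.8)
The plan is to read off $w_n\ge Z^4$, which in view of $w_n=\sigma_n^4/m_{4,n}$ is the same as $m_{4,n}\le\sigma_n^4/Z^4$, from large-deviation control on the centred cumulants $m_{2,n}$ and $m_{4,n}$, after disposing of small $Z$ by hand. Since $m_{4,n}\le\tfrac95\sigma_n^4$ for every $n\ge1$ by~\eqref{eq:mkn-mino-expl}, the inequality holds with no exceptions as soon as $Z^4\le\tfrac59$; we may thus assume $Z\ge(\tfrac59)^{1/4}$, whereupon $Z\le c\,\ubar^{1/4}$ forces $\ubar\ge\tfrac59 c^{-4}$, so that $\ubar$ exceeds any prescribed absolute constant once $c$ is small enough, and $\sqrt{\ubar}/(Z+1)\asymp\sqrt{\ubar}/Z$.

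For the remaining range I would deduce the bound from two estimates furnished by Corollaire~\ref{coro:tailles-Mk} (its cases $k=1$ and $k=2$, the parameter there being chosen as a function of $Z$): a lower bound $\sigma_n^2=m_{2,n}\gg(\log x)^2/\ubar$ and an upper bound $m_{4,n}\ll(\log x)^4/\ubar^3$, each valid for all $n\in S(x,y)$ outside a set of $\ll\e^{-c\sqrt{\ubar}/(Z+1)}\Psi(x,y)$ integers. On the intersection of the two good sets one then gets
\[
w_n=\frac{\sigma_n^4}{m_{4,n}}\gg\frac{(\log x)^4/\ubar^{2}}{(\log x)^4/\ubar^{3}}=\ubar,
\]
and, writing $\kappa$ for the implied constant, the inequality $Z^4\le c^4\ubar$ gives $w_n\ge\kappa\ubar\ge Z^4$ as soon as $c^4\le\kappa$. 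It then only remains to check that the finitely many smallness conditions placed on the absolute constant $c$ are mutually compatible, which they plainly are.

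I expect the whole difficulty to be concentrated in the upper bound for $m_{4,n}$, i.e.\ in the case $k=2$ of Corollaire~\ref{coro:tailles-Mk}. Here $m_{4,n}=\sum_{p^\nu\|n}\bbE\big(\{D_{p^\nu}-\tfrac12\nu\log p\}^4\big)\asymp\sum_{p^\nu\|n}(\nu\log p)^4$, and the tempting shortcut — bounding this sum by $(\log Q_n)^2 m_{2,n}$ with $Q_n:=\max\{p^\nu:p^\nu\|n\}$ and then counting the $n\in S(x,y)$ with $Q_n$ atypically large via $\sum_{p,\nu}\Psi(x/p^\nu,y)$ and the Hildebrand--Tenenbaum estimates for $\Psi$ — is too lossy: when $y$ is small, a fixed power of $\log x$ divides as a prime power a non-negligible (merely polynomially small) proportion of friable integers, while such a divisor contributes negligibly to $\sum(\nu\log p)^4$ and so does not push $m_{4,n}$ past a constant multiple of its typical size $(\log x)^4/\ubar^3$, nor spoil $w_n\ge Z^4$. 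One must therefore estimate the full sum $\sum(\nu\log p)^4$ directly; this is where the saddle-point analysis of $\Psi$ and the new large-deviation estimates for additive functions (announced in the abstract) intervene, and in particular where the exceptional set is shown to have logarithmic size $\asymp\sqrt{\ubar}/(Z+1)$ rather than a weaker power of $\ubar$.
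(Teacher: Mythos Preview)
Your approach --- reduce to Corollaire~\ref{coro:tailles-Mk} after handling small~$Z$ via the pointwise bound~\eqref{eq:mkn-mino-expl} --- is exactly the paper's, and your closing remarks correctly locate the real work in the large-deviation upper bound for~$m_{4,n}$.

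There is, however, a quantitative slip in the main step. You claim the upper bound $m_{4,n}\ll(\log x)^4/\ubar^{3}$ with an exceptional set of size $\ll\e^{-c\sqrt{\ubar}/(Z+1)}\Psi(x,y)$, and from it conclude $w_n\gg\ubar$. But Corollaire~\ref{coro:tailles-Mk} does not deliver this pairing: the bound $m_{4,n}\ll(\log x)^4/\ubar^{3}$ (the typical order of~$m_{4,n}$) corresponds to the choice $\delta\asymp1$ (equivalently $h=0$) in Proposition~\ref{prop:majo-fk} with $k=4$, and the resulting exceptional set is only $\ll\e^{-c\ubar^{1/4}}\Psi(x,y)$. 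That is too large whenever $Z$ is well below $\ubar^{1/4}$ --- for bounded~$Z$, for instance, the proposition demands the much smaller exception $\e^{-c\sqrt{\ubar}}$. What the corollary actually gives with the stated exceptional set is the $Z$-dependent bound
\[
m_{4,n}\ \ll\ \frac{(\log x)^4}{\ubar^{2}Z^{4}},
\]
obtained from Proposition~\ref{prop:majo-fk} with $\delta\asymp\ubar/Z^4$, so that $(\min(\delta,\delta^2)\ubar)^{1/4}\asymp\sqrt{\ubar}/Z$. Combined with $m_{2,n}\gg(\log x)^2/\ubar$ (whose exceptional set $\ll\e^{-c\sqrt{\ubar}}\Psi(x,y)$ is certainly admissible), this yields
\[
w_n=\frac{m_{2,n}^2}{m_{4,n}}\ \gg\ \frac{(\log x)^4/\ubar^{2}}{(\log x)^4/(\ubar^{2}Z^{4})}=Z^{4}
\]
directly, with no detour through $w_n\gg\ubar$. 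Once this is corrected, your argument and the paper's coincide.
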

\par 

Lorsque $z$ est fixé, le terme d'erreur du Théorème~\ref{thm:TCL-cR} est $\ll1/w_n$. Une telle majoration est hors de portée des  théorèmes généraux probabilistes relatifs aux grandes déviations, comme par exemple~\cite[th.~VIII.2, p. 219]{Petrov}. En effet, convenablement adapté et toutes choses égales par ailleurs, cet énoncé ne fournirait, pour le terme d'erreur de~\eqref{eq:pcp}, qu'une borne du type
$$ \ll\bigg(\frac{1 + z}{\sqrt{\ubar}} + \frac{z^4}{\ubar}\bigg) \Phi(|z|). $$
Il est connu que toute amélioration du terme résiduel standard nécessite d'exclure, pour les variables aléatoires indépendantes sommées,   la possibilité d'une concentration au voisinage d'un même réseau unidimensionnel : voir la condition~(III), p.~173, de Petrov~\cite{Petrov} ou la condition~(6.6), p.~547, de Feller~\cite{Feller}. Dans notre cas, la validation de cette hypothèse supplémentaire consiste en une majoration de sommes trigonométriques portant sur les diviseurs premiers de~$n$.  Cette estimation constitue une part significative de notre analyse.

\medskip

Notre résultat fait suite aux travaux de Basquin~\cite{Basquin} et de Drappeau~\cite{D-moyenne}, dévolus à l'étude de la moyenne

\begin{equation}
\label{def-Dxyz}
 \cD(x, y ; z) := \frac1{\Psi(x, y)}\sum_{n\in S(x, y)}\Prob\Big(D_n \geq \tfrac12\log n + z\bsigma\Big) 
 \end{equation}
où~$\bsigma=\bsigma(x, y)$ est  un facteur de renormalisation convenable indépendant de~$n$. Le choix optimal de~$\bsigma$ fait intervenir le point-selle~$\alpha=\alpha(x, y)$ introduit par Hildebrand et Tenenbaum~\cite{HT} dans l'étude de la fonction $\Psi(x,y)$, et dont nous rappelons la définition en~\eqref{eq:implicite-alpha} \infra. Nous avons alors 
\begin{equation}
\label{def-sigmabar}
 \bsigma(x, y)^2 := \frac12\sum_{p\leq y}\frac{p^\alpha - 1/3}{(p^\alpha - 1)^2}(\log p)^2. 
  \end{equation}
Cette définition est en cohérence avec le théorème de Tur\'{a}n-Kubilius friable~\cite[th.~1.1]{BT-TK}, qui implique
$$ \sigma_n =\bigg\{1 + O\bigg(\frac{1}{\ubar^{1/3}}\bigg)\bigg\}\bsigma $$
pour tous les entiers $n$ de $S(x, y)$ sauf au plus~$\ll\Psi(x, y)/\ubar^{1/3}$ exceptions.

L'estimation principale obtenue dans~\cite{D-moyenne} est 
\begin{equation}
\cD(x, y ; z) = \Phi(z) + O_\ee\bigg(\Phi(|z|)\frac{1+z^4}{\ubar}\bigg) \qquad \Big(2\leq y \leq x^{1/(\log_2 x)^{1+\ee}},\,z\ll \ubar^{1/4}\Big), \label{eq:estim-moy}
\end{equation}
où, ici et dans la suite, nous notons~$\log_k x$ la $k$-ième itérée de la fonction logarithme.
Fondé sur la méthode du col en deux variables, l'argument conduisant à \eqref{eq:estim-moy} est spécifique de l'estimation en moyenne et ne fournit pas d'information «presque sûre» de type~\eqref{eq:pcp}. Combiné avec une étude des fluctuations de~$\sigma_n$ autour de son ordre normal~$\bsigma$,  le Théorème~\ref{thm:pcp-coro}  fournit naturellement l'estimation suivante.
\begin{theoreme}\label{thm:Dxyz}
Nous avons
\begin{equation}
\cD(x, y ; z) = \Phi(z) + O\bigg(\Phi(|z|)\frac{1+z^4}{\ubar}\bigg) \qquad \Big(2\leq y \leq x,\ |z|\leq \cte{moy}\ubar^{1/5}\Big),\label{eq:estim-moy-2}
\end{equation}
où ~$\cte{moy}>0$ est une constante convenable. 
\end{theoreme}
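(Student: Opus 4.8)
The plan is to combine the almost-sure estimate of Corollary \ref{thm:pcp-coro} with control of the exceptional set, reducing everything to an averaging argument over $S(x,y)$. First I would fix $c>0$ as in Corollary \ref{thm:pcp-coro} and split the sum defining $\cD(x,y;z)$ into the "good" integers $n\in\cG$ for which \eqref{eq:pcp} holds, and the "bad" set $\cB=S(x,y)\setminus\cG$, which by hypothesis satisfies $|\cB|\ll\e^{-c\ubar^{1/4}}\Psi(x,y)$. On $\cG$, the contribution to $\cD(x,y;z)$ is
$$ \frac1{\Psi(x,y)}\sum_{n\in\cG}\Prob\Big(D_n\geq\tfrac12\log n+z\sigma_n\Big)=\Phi(z)\cdot\frac{|\cG|}{\Psi(x,y)}+O\bigg(\frac{1+z^4}{\ubar}\Phi(|z|)\bigg), $$
valid in the range $z\ll\ubar^{1/4}$, which certainly contains $|z|\leq \cte{moy}\ubar^{1/5}$ once $\cte{moy}$ is small enough. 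Since $|\cG|/\Psi(x,y)=1+O(\e^{-c\ubar^{1/4}})$ and $\Phi(z)\leq\Phi(-|z|)\ll \Phi(|z|)$ for $z\leq 0$ (for $z\geq 0$ use $\Phi(z)\leq 1$ and compare trivially), the error from replacing $|\cG|$ by $\Psi(x,y)$ is $O(\e^{-c\ubar^{1/4}})$, which is absorbed into $O((1+z^4)\Phi(|z|)/\ubar)$.

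The only genuine issue is the contribution of $\cB$. Here the trivial bound $\Prob(D_n\geq\cdots)\leq 1$ gives a contribution $\ll\e^{-c\ubar^{1/4}}$, which is fine when $z$ is bounded, but for large $z$ we must produce a factor comparable to $\Phi(|z|)$ to match the claimed error term. Since $\Phi(|z|)\gg \e^{-z^2/2}/(1+|z|)$, and in the range $|z|\leq\cte{moy}\ubar^{1/5}$ we have $z^2\leq \cte{moy}^2\ubar^{2/5}$, the exponential $\e^{-c\ubar^{1/4}}$ dominates $\e^{-z^2/2}$ with room to spare: indeed $\e^{-c\ubar^{1/4}}\ll \e^{-z^2/2}\e^{-(c/2)\ubar^{1/4}}\ll\Phi(|z|)(1+|z|)\e^{-(c/2)\ubar^{1/4}}\ll\Phi(|z|)/\ubar$ once $\ubar$ is large. (For bounded $\ubar$ the whole statement is trivial, the error term being $\gg1$, so one may assume $\ubar$ exceeds any fixed constant.) This is precisely why the exponent $1/5$ appears: it leaves a power-saving margin between $\ubar^{2/5}$ and $\ubar^{1/2}$, so that the size $\e^{-c\ubar^{1/4}}$ of the exceptional set beats $\e^{z^2/2}/\ubar$ throughout the range. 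The constant $\cte{moy}$ is then chosen so that $\cte{moy}^2\ubar^{2/5}\leq c\ubar^{1/4}$ fails to be an obstruction — more simply, any $\cte{moy}$ works since $\ubar^{2/5}=o(\ubar^{1/4}\cdot\ubar^{\delta})$ is false; rather, one needs $\tfrac12\cte{moy}^2\ubar^{2/5}\le (c/2)\ubar^{1/4}$ only for $\ubar$ bounded, handled trivially, while for large $\ubar$ the gain $\e^{-(c/4)\ubar^{1/4}}$ is kept.

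Assembling the two parts yields \eqref{eq:estim-moy-2}. The main obstacle, such as it is, is purely bookkeeping: one must check that the exceptional-set bound $\e^{-c\ubar^{1/4}}$ from Corollary \ref{thm:pcp-coro} is strong enough to override the Gaussian tail $\Phi(|z|)^{-1}$ uniformly in the stated $z$-range, and to organise the constants so that the restriction $|z|\leq\cte{moy}\ubar^{1/5}$ keeps $z\ll\ubar^{1/4}$ (needed to invoke \eqref{eq:pcp}) with margin. No new analytic input beyond Corollary \ref{thm:pcp-coro} is required; in particular the saddle-point analysis and the friable Turán–Kubilius inequality have already done the work, and the fluctuation study of $\sigma_n$ around $\bsigma$ alluded to in the text is exactly what is packaged inside Corollary \ref{thm:pcp-coro} via Proposition \ref{prop:taille-cR}.
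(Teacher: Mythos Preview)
There are two genuine gaps.

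\textbf{The main one:} $\cD(x,y;z)$ is defined in~\eqref{def-Dxyz} with the normalisation $z\bsigma$, where $\bsigma=\bsigma(x,y)$ is the \emph{$n$-independent} quantity~\eqref{def-sigmabar}, whereas Corollary~\ref{thm:pcp-coro} concerns $\PP(D_n\geq\tfrac12\log n+z\sigma_n)$ with the $n$-dependent $\sigma_n$. Your displayed ``contribution of~$\cG$'' is therefore not a contribution to $\cD(x,y;z)$ at all. When Theorem~\ref{thm:TCL-cR} is applied at the point $\tfrac12\log n+z\bsigma$, the main term becomes $\Phi(z\bsigma/\sigma_n)$, and the bulk of the paper's proof is precisely the averaging of $\Phi(z\bsigma/\sigma_n)$ over $n\in S(x,y)$: this requires the large-deviation bounds for $\sigma_n^2$ (Propositions~\ref{prop:mino-fk-precis} and~\ref{prop:majo-fk}), a Cauchy--Schwarz argument on the intermediate range of $w_n$, an average of $m_{4,n}$ to control the $O((1+z^4)/w_n)$ error, and finally the friable Tur\'an--Kubilius inequality to evaluate the first two moments of $(\sigma_n^2-\bsigma^2)/\bsigma^2$. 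None of this is packaged inside Corollary~\ref{thm:pcp-coro}; Proposition~\ref{prop:taille-cR} controls $w_n=\sigma_n^4/m_{4,n}$, not $|\sigma_n-\bsigma|$.

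\textbf{The second:} your treatment of the exceptional set does not close. For $|z|\leq c_1\ubar^{1/5}$ one has $z^2\leq c_1^2\ubar^{2/5}$, and since $2/5>1/4$ the bound $\e^{-c\ubar^{1/4}}$ from Corollary~\ref{thm:pcp-coro} is \emph{not} $\ll\Phi(|z|)/\ubar$ at the top of the range --- it is exponentially larger. You spotted this yourself (``$\ubar^{2/5}=o(\ubar^{1/4}\cdot\ubar^{\delta})$ is false'') but the attempted fix is incoherent. The paper avoids this by invoking Proposition~\ref{prop:taille-cR} directly with $Z\asymp\sqrt{z}$, which gives an exceptional set of size $\ll\e^{-c\sqrt{\ubar}/(1+\sqrt{z})}\ll\e^{-c'\ubar^{2/5}}$; this does dominate $\Phi(z)^{-1}$ once $c_1$ is small enough, and is the reason the exponent $1/5$ appears.
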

Le domaine d'uniformité en~$z$ est moins grand que celui de~\eqref{eq:estim-moy} mais le résultat vaut sans contrainte sur~$x$ et~$y$.


\subsection*{Notations}
Nous désignons par~$\mes$ la probabilité conditionnelle uniforme sur~$S(x, y)$ : pour toute partie~$\cA \subset [1,x]$, nous avons donc
$$ \mes(\cA) := \frac{|\cA\cap S(x,y)|}{\Psi(x, y)}. $$
Par ailleurs, nous notons classiquement~$\omega(n)$ le nombre des facteurs premiers distincts d'un entier naturel~$n$.
\par 
Enfin, sauf dépendance explicitement mentionnée, les constantes $c,\,c_1,\,c_2,$ etc. apparaissant dans ce travail sont absolues et strictement positives.


\section{Entiers friables}

\subsection{Méthode du col}

Notre résultat est basé sur l'utilisation de la méthode du col, développée dans ce contexte par Hildebrand et Tenenbaum~\cite{HT}. Nous rappelons ici les principales estimations  dont nous ferons usage.

La quantité~$\Psi(x, y)$ est représentée par l'intégrale de Perron
$$ \Psi(x, y) = \frac1{2\pi i}\int_{\sigma - i\infty}^{\sigma+i\infty} \zeta(s, y) \frac{x^s\dd s}{s} \qquad (x\not\in\bbN), $$
où intervient le produit eulérien tronqué
$$ \zeta(s, y) := \sum_{P(n) \leq y} \frac1{n^s} = \prod_{p\leq y}\Big(1-\frac1{p^s}\Big)^{-1}. $$
Le point-selle~$\alpha = \alpha(x, y)$ est défini par~$\alpha := \argmin_{\sigma > 0} x^\sigma \zeta(\sigma, y)$. Il s'agit donc de la fonction implicite correspondant à l'équation 
\begin{equation}
\sum_{p\leq y}\frac{\log p}{p^\alpha - 1} = \log x.\label{eq:implicite-alpha}
\end{equation}
Observons immédiatement, à fins de référence ultérieure, que l'équation \eqref{eq:implicite-alpha} implique immédiatement
\begin{equation}
\label{minya}
y^\alpha\gg 1+\frac{y}{\log x}\cdot
\end{equation}
Définissons également
\begin{equation}
\sigma_2^* = \sigma_2^*(\alpha, y) := \sum_{p\leq y}\frac{(\log p)^2 p^\alpha}{(p^\alpha - 1)^2}\cdot\label{eq:def-sigma2}
\end{equation}
 L'évaluation par le théorème des nombres premiers des sommes en~$p$ de~\eqref{eq:implicite-alpha} et~\eqref{eq:def-sigma2} (\textit{cf.}~\cite[th.~2]{HT}) fournit les estimations
\begin{eqnarray}\label{eq:approx-alpha}
 &\alpha = \dsp\frac{\log(1+y/\log x)}{\log y}\Big\{ 1 + O\Big(\frac{\log_2 2y}{\log y}\Big)\Big\},
\\
\label{eq:approx-sigma2}
&\sigma_2^* = \dsp\Big(1+\frac{\log x}y\Big)(\log x)(\log y)\Big\{1 + \Big(\frac1{\log 2\ubar}\Big)\Big\}.
\end{eqnarray}
Mentionnons aussi l'approximation 
\begin{equation}
\label{eq:estim-alpha-1111}
\alpha(x, y) = 1 - \frac{\log(\ubar \log(\ubar+1)) + O(1)}{\log y} \qquad \big(2\leq y \leq x\big).
\end{equation}
\par 
Le résultat principal de~\cite{HT} est la formule asymptotique uniforme
$$ \Psi(x, y) = \frac{x^\alpha \zeta(\alpha, y)}{\alpha\sqrt{2\pi \sigma_2^*}}\Big\{1 + O\Big(\frac1{\ubar}\Big)\Big\}\qquad (2\leqslant y\leqslant x), $$
qui implique en particulier
\begin{equation}\label{eq:psi-zeta-asymp}
\Psi(x, y) \asymp \frac{x^\alpha \zeta(\alpha, y)\sqrt{\ubar}}{\alpha\log x} \qquad \big(2\leq y \leq x\big).
\end{equation}
La majoration
\begin{equation}\label{eq:psi-local}
\Psi\Big(\frac xd, y\Big) \ll \frac{\Psi(x, y)}{d^\alpha } \qquad \big(2\leq y\leq x,\, 1\leq d \leq x\big),
\end{equation}
qui est un cas particulier important de~\cite[th.~2.4]{BT05}, nous sera également utile.
\subsection{Moyennes friables de fonctions multiplicatives}

Divers travaux, notamment~\cite[cor.~2.3]{TW}, permettent des estimations satisfaisantes de la valeur moyenne d'une fonction multiplicative~$g\geq 0$ sous des hypothèses relativement générales concernant le comportement en moyenne des valeurs~$g(p)$, dans un domaine de la forme
$$ \e^{(\log_2 x)^{5/3+\ee}} \leq y \leq x .\leqno{(H_\varepsilon)}$$
Pour des valeurs de~$y$ plus petites, la méthode du col demeure théoriquement applicable. Cependant, à mesure que~$\alpha(x, y)$ diminue, la description du comportement moyen de~$g$ nécessite des hypothèses sur les valeurs~$g(p^\nu)$ pour des valeurs croissantes de~$\nu$ --- voir par exemple \cite[th.~1.4]{BT-TK} et les commentaires qui suivent cet énoncé. La recherche d'estimations  uniformes en~$g$ devient alors un problème significativement plus délicat.
\par 
Dans le cadre du présent travail, une majoration de la valeur moyenne
$$ G(x, y) := \frac1{\Psi(x, y)}\sum_{n\in S(x, y)} g(n) $$
est suffisante.
Dans cette perspective, nous pouvons  nous contenter d'une approche élémentaire combinée avec l'évaluation au point-selle~\eqref{eq:psi-zeta-asymp} et la majoration semi-asymptotique~\eqref{eq:psi-local}. Notre estimation s'exprime en termes du produit eulérien
$$ \GG_y(\alpha) := \frac1{\zeta(\alpha, y)} \prod_{p\leq y}\ssum{\nu\geq 0 \\ p^\nu \leq x}\frac{g(p^\nu)}{p^{\nu\alpha}}, $$
où~$x$, $y$ et~$\alpha$ sont liés par la relation~\eqref{eq:implicite-alpha}. Lorsque, pour chaque nombre premier $p\leqslant y$, il existe $\nu_p\in\bbN^*$ tel que
\begin{equation}
g(p^\nu)=0\quad (\nu>\nu_p), \qquad g(p^{\nu-1}) \leq g(p^\nu) \qquad (1\leq \nu\leq \nu_p) ,\label{eq:cond-croiss}
\end{equation}
nous posons
$$ \GG_y^*(\alpha) := \frac1{\zeta(\alpha, y)} \prod_{p\leq y}\bigg\{\sum_{ 0 \leqslant \nu\leqslant \nu_p}\frac{g(p^\nu)}{p^{\nu\alpha}}+\frac{g(p^{\nu_p})}{p^{\nu_p\alpha}(p^\alpha-1)}\bigg\}=\prod_{p\leq y}\bigg\{1+\sum_{1\leqslant \nu\leqslant \nu_p}\frac{g(\pnu)-g(p^{\nu-1})}{p^{\nu\alpha}}\bigg\}.$$

\begin{lemme}
\label{lemme:majo-f-mult}
Soit~$g:\bbN\to\bbR^+$ une fonction multiplicative positive ou nulle. Les assertions suivantes sont valables pour~$2\leq y \leq x$.
\begin{enumerate}
\item Si~$g(n) \leq \kappa^{\omega(n)}$ pour un certain~$\kappa\geq 1$ et tout entier~$n\geqslant 1$, alors
\begin{equation}
G(x, y) \ll \kappa\sqrt{\ubar}(\log2\ubar)^2 \GG_y(\alpha) .\label{eq:majo-G-1}
\end{equation}
\item Sous la condition \eqref{eq:cond-croiss}, nous avons
\end{enumerate}
\begin{equation}
G(x, y) \ll \GG_y^*(\alpha) .\label{eq:majo-C-2}
\end{equation}
\end{lemme}
\begin{remarque}
Il est essentiel pour notre application de disposer d'une majoration de type~\eqref{eq:majo-G-1} dans laquelle le facteur de~$\GG_y(\alpha)$ est~$\e^{o(\ubar)}$ lorsque~$\ubar\to\infty$.
\end{remarque}
\begin{proof}
Supposons dans un premier temps~$\log y \leq (\log 2u)^2$, et donc~$\log y \ll (\log 2\ubar)^2$. La positivité de~$g$ permet d'écrire, compte tenu de~\eqref{eq:psi-zeta-asymp},
$$ G(x, y) \leq \frac{x^\alpha}{\Psi(x, y)} \sum_{n\in S(x, y)}\frac{g(n)}{n^\alpha} \ll \frac{\sqrt{\ubar}(\log y)}{\zeta(\alpha, y)}\prod_{p\leq y} \ssum{\nu\geq 0\\ p^\nu \leq x} \frac{g(p^\nu)}{p^{\nu\alpha}}\cdot $$
Cela implique bien~\eqref{eq:majo-G-1} dans l'éventualité considérée.
\par 
Supposons maintenant~$\log y > (\log 2u)^2$, et donc~$y\gg \log x$. Nous utilisons une inégalité de type Rankin pour la quantité\begin{equation}
G(x, y)\log x = \frac1{\Psi(x, y)}\sum_{n\in S(x, y)} g(n)\Big\{\log n + \log\Big(\frac xn\Big)\Big\}.\label{eq:G-log-decomp}
\end{equation}
Puisque~$\log(x/n)\leq (x/n)^\alpha/\alpha$, nous avons
\begin{equation}
\sum_{n\in S(x, y)} g(n)\log\Big(\frac xn\Big) \leqslant \frac{x^\alpha}{\alpha} \sum_{n\in S(x, y)} \frac{g(n)}{n^\alpha} \ll \frac{\log x}{\sqrt{u}} \Psi(x, y)\GG_y(\alpha) .\label{eq:G-apdecomp-1}
\end{equation}
Considérons ensuite la contribution du terme~$\log n$ au membre de droite de~\eqref{eq:G-log-decomp}. Sous l'hypothèse $g(n)\leqslant \kappa^{\omega(n)}$, nous avons
\begin{align*}
\sum_{n\in S(x, y)} g(n)\log n = &\ \sum_{p^\nu \in S(x, y)}\ssum{m\in S(x/p^\nu, y) \\ p\,\nmid\, m} g(mp^\nu)\log(p^\nu) \\
\leq &\ \kappa \sum_{m\in S(x, y)} g(m) \sum_{p^\nu \in S(x, y)} \log(p^\nu).
\end{align*}
La somme intérieure est 
$$ \ll \sum_{p\leq \min\{y, x/m\}} \frac{\{\log(x/m)\}^2}{\log p} \ll \min\bigg\{ \frac xm, \frac{y(\log(x/m))^2}{(\log y)^2}\bigg\} \leq \Big(\frac xm\Big)^{\alpha}(yu^2)^{1-\alpha}. $$
Comme la condition~$\log2u < \sqrt{\log y}$ implique
$$ (1-\alpha)\log u \ll (\log 2u)^2/\log y \ll 1, $$
nous obtenons
\begin{equation}
\begin{aligned}
\sum_{n\in S(x, y)} g(n)\log n \ll &\ \kappa y^{1-\alpha} x^\alpha \sum_{m\in S(x, y)}\frac{g(m)}{m^\alpha} \\ \ll &\ \kappa\sqrt{u}(\log 2u)(\log x)\Psi(x, y) \GG_y(\alpha).
\end{aligned}
\label{eq:G-apdecomp-2}
\end{equation}
Reportons~\eqref{eq:G-apdecomp-1} et~\eqref{eq:G-apdecomp-2} dans~\eqref{eq:G-log-decomp}.  Après développement de la somme sur $S(x,y)$ en produit eulérien, il suit
$$ G(x, y) \ll \Big(\kappa(\log 2u)  \sqrt{u}+ \frac1{\sqrt{u}}\Big)\GG_y(\alpha). $$
Cela implique~\eqref{eq:majo-G-1} dans le domaine~$\log y \geq (\log 2u)^2$ et conclut donc la preuve de l'assertion~(i).
\par 
Pour établir~\eqref{eq:majo-C-2}, nous introduisons la fonction arithmétique multiplicative $h$ telle que ~$g=\1*h$. Pour tout entier~$n$ de $E_g := \{n\in\bbN^* : g(n)>0\}$, l'hypothèse~\eqref{eq:cond-croiss} implique alors~$h(n)\geq 0$ et~$d\in E_g$ pour tout~$d|n$.
Ainsi
$$ G(x, y) = \frac1{\Psi(x, y)}\ssum{n\in E_g\cap S(x, y)} g(n) \leq \sum_{d\in E_g\cap S(x, y)} h(d)\frac{\Psi(x/d, y)}{\Psi(x, y)} \ll \sum_{d\in E_g\cap S(x, y)}\frac{h(d)}{d^\alpha} $$
où l'on a fait appel à~\eqref{eq:psi-local}. Un calcul de produit eulérien permet ensuite, grâce à la positivité de~$g$, de majorer la dernière somme en $d$ par $\GG_y^*(\alpha)$.
Cela termine la preuve de l'assertion~(ii).
\end{proof}


\section{Répartition des valeurs de certaines fonctions additives}

Nous nous proposons ici d'étudier l'ordre de grandeur normal friable des fonctions additives
$$ f_k(n) := \ssum{p^\nu \| n \\ \nu > 0} (\nu\log p)^k $$
lorsque l'entier~$k\geq 0$ est fixé. Un outil essentiel à cette tâche est l'inégalité de Tur\'{a}n-Kubilius (voir~\cite[ch.~III.3]{ITAN}), dont l'analogue friable a été développé dans des travaux récents de La Bretèche et Tenenbaum~\cite{BT-TK, BT-TK2}. Ces travaux exploitent une modélisation de la structure multiplicative de l'ensemble~$S(x, y)$ consistant en l'approximation $$\nu_{x,y}\{n\geqslant 1:\pnu\|n\}\approx (1-p^{-\alpha})/p^{\nu\alpha},$$ les événements étant considérés comme indépendants pour des valeurs distinctes de $p\leq y$.
\par 
Cette heuristique est confortée par le fait, établi dans~\cite{BT-TK}, que les valeurs prises par~$f_k$ ont tendance à se concentrer autour de la valeur moyenne du modèle
\begin{equation}
A_{f_k}(x, y) := \ssum{p\leq y \\ \nu\geq 1} \frac{f_k(p^\nu)}{p^{\nu\alpha}}\Big(1-\frac1{p^\alpha}\Big),\label{eq:def-Af}
\end{equation}
lorsque~$n\in S(x, y)$ et~$\ubar \to \infty$. Notons qu'il existe une suite de polynômes $\{Q_k\}_{k=0}^{\infty}$ à coefficients positifs ou nuls telle que $\deg Q_k\leqslant k$ et
 $$A_{f_k}(x,y)=\sum_{p\leqslant y}\Big(1-\frac{1}{p^\alpha}\Big)\frac{(\log p)^kQ_k(p^\alpha)}{(p^\alpha-1)^{k+1}},$$ d'où l'on déduit comme au lemme~4 de~\cite{HT} l'évaluation explicite
\begin{equation}
\label{eq:asymp-Afk}
A_{f_k}(x, y) \asymp_k (\log x)^k/\ubar^{k-1} \qquad (k\geq 1).
\end{equation}\par 
Nous nous intéressons dans ce qui suit à estimer la taille des sous-ensembles de~$S(x, y)$ caractérisés par la propriété que~$f_k(n)$ est significativement plus grand ou plus petit que la moyenne~$A_{f_k}(x, y)$.

\subsection{Nombre des facteurs premiers}

Le cas~$k=0$, qui correspond à~$f_0(n) = \omega(n)$, est particulier. Nous avons, d'après les lemmes~3.2, 3.6 et l'équation~(2.37) de~\cite{BT05},
\begin{align*}
A_{\omega}(x, y) = \sum_{p\leq y} \frac1{p^\alpha} = &\ \Big\{1+O\Big(\frac1{\log y}+\frac1{\log2\ubar}\Big)\Big\}\frac{yu}{y + \log x} + \log_2 y \numberthis\label{eq:asymp-norm-omega} \\
\asymp &\ \ubar + \log_2 y.
\end{align*}
Le terme~$\log_2 y$, qui est dû à l'influence des petits facteurs premiers, n'intervient pas dans l'étude des fonctions~$f_k$ lorsque~$k\geqslant 1$.
\par 
Posons
\begin{equation}
\omega_{x, y}(n) := \big|\{p^\nu \| n: \ \sqrt{y} <p \leq y,\ u/(2\ubar) \leq \nu \leq 2u/\ubar\}\big|\qquad (n\geqslant 1)\label{eq:def-omegat}
\end{equation}
et notons  que $ u/\ubar \asymp 1 + (\log x)/y$.
La proposition suivante permet d'établir que, avec une probabilité tendant vers 1, les  puissances de nombres premiers comptés dans $\omegat_{x, y}(n)$  contribuent significativement au terme~$yu/(y+\log x)$ apparaissant dans le membre de droite de~\eqref{eq:asymp-norm-omega}. 

\begin{lemme}\label{lemme:mino-omegat}
Il existe constante absolue~$\cte{wt}>0$ telle que l'on ait
\begin{equation}
\mes\big\{n\geqslant 1 : \omegat_{x, y}(n) \leq \cte{wt} \ubar\big\} \ll \e^{-\cte{wt}\ubar}\qquad (2\leqslant y\leqslant x).\label{eq:mino-omegat}
\end{equation}
\end{lemme}
\begin{proof}
Pour tout~$c>0$, nous avons
\begin{equation}
\nu_{x, y}\big\{n\geqslant 1 : \omega_{x, y}(n) \leq c \ubar \big\} \leq \frac{\e^{c\ubar}}{\Psi(x, y)}\sum_{n\in S(x, y)}\e^{-\omega_{x, y}(n)}.\label{eq:omegat-nu-exp}
\end{equation}
D'après~\eqref{eq:majo-G-1} avec~$\kappa=1$, le membre de droite ne dépasse pas
\begin{equation}
\begin{aligned}
&\frac{\ubar\e^{c\ubar}}{\zeta(\alpha, y)}\sum_{P(n)\leq y}\frac{\e^{-\omega_{x, y}(n)}}{n^\alpha} = \ubar\e^{c\ubar}\prod_{\sqrt{y}< p \leq y} \bigg\{1 - \Big(1-\frac{1}\e\Big)\bigg(\frac1{p^{\nu_1\alpha}} - \frac1{p^{(\nu_2+1)\alpha}}\bigg)\bigg\}
\end{aligned}\label{eq:omegat-somme-prod}
\end{equation}
où l'on a posé~$(\nu_1, \nu_2) := (\lceil u/(2\ubar)\rceil, \lfloor 2u/\ubar\rfloor )$. Comme~$\nu_1-1\ll1/(\alpha \log y)$ d'après \eqref{eq:approx-alpha}, nous pouvons écrire
$$ p^{\nu_1 \alpha} \ll p^{\alpha}\qquad (\sqrt{y}<p\leqslant y). $$
De plus, nous avons $(\nu_2 - \nu_1 + 1)\alpha \log p \geq \dm\{3u/(2\ubar) - 1\}\alpha\log y \gg 1$ pour toutes les valeurs de $p$ considérées, donc
$$ \frac1{p^{\nu_1\alpha}} - \frac1{p^{(\nu_2+1)\alpha}} \gg \frac1{p^\alpha}\qquad (\sqrt{y}<p\leqslant y). $$
Or,~$\sum_{\sqrt{y} <p \leq y} 1/p^{\alpha} \gg \ubar$ d'après \cite[lemme~3.6]{BT05}. Il existe donc une constante absolue $\cte{cprime}$ telle que le produit en $p$ de \eqref{eq:omegat-somme-prod} ne dépasse pas $\e^{-\cte{cprime}\ubar}$.
L'estimation souhaitée en résulte par report dans~\eqref{eq:omegat-nu-exp} et~\eqref{eq:omegat-somme-prod} pour le choix choix~$\cte{wt}=\cte{cprime}/3$.
\end{proof}

\subsection{Minoration de~\texorpdfstring{$f_k(n)$}{fk}}

Nous déterminons ici une minoration de~$f_k(n)$ valable pour chaque indice~$k\geq 1$ fixé et ``presque'' tous les entiers $n$ de $S(x,y)$. Rappelons la définition \eqref{eq:def-omegat}.
\begin{lemme}\label{lemme:mino-fk}
Soit~$k\geq 1$. Nous avons 
$$ f_k(n) \gg_k (\log x)^k/\ubar^{k-1} $$
pour tous les entiers $n$ de~$S(x, y)$ sauf au plus~$\ll\e^{-\cte{wt}\ubar}\Psi(x, y)$ exceptions.
\end{lemme}
\begin{proof} Nous avons trivialement $f_k(n) \geq  (u\log y/4\ubar)^k\omega_{x,y}(n)$. L'assertion de l'énoncé résulte donc immédiatement du Lemme~\ref{lemme:mino-omegat}.
\end{proof}
Cette majoration simple suffit à la preuve du Théorème~\ref{thm:TCL-cR}. L'estimation plus précise suivante, qui possède un intérêt propre, ne sera pas utilisée dans la suite. En conformité avec \eqref{eq:def-Af}, nous posons en toute généralité
$$A_f(x,y):=\ssum{p\leqslant y \\ \nu\geqslant 1}\frac{f(\pnu)}{p^{\nu\alpha}}\Big(1-\frac{1}{p^\alpha}\Big),\qquad B_f(x,y)^2:=\ssum{p\leqslant y \\ \nu\geqslant 1}\frac{f(\pnu)^2}{p^{\nu\alpha}}\Big(1-\frac{1}{p^\alpha}\Big)$$
lorsque $f$ est une fonction additive.
\begin{prop}\label{prop:mino-fk-precis}
Soient~$k\geq 1$ et~$f$ une fonction additive satisfaisant à
\begin{equation}
0\leq  f(p^\nu) \asymp (\nu\log p)^k\qquad (p\geqslant 2,\nu\geqslant 1).\label{eq:hypo-mino-prec}
\end{equation}
Pour~$2\leq y \leq x$ et~$\delta\in]0, 1]$, nous avons alors
\begin{equation}
\nu_{x, y}\big\{n : f(n) \leq (1-\delta) A_{f}(x, y)\big\} \ll_k \ubar \e^{- \cte[k]{minofk} \delta^2 \ubar}\label{eq:mino-f-precis}
\end{equation}
où~$\cte[k]{minofk}>0$ ne dépend que de~$k$ et de la constante implicite dans~\eqref{eq:hypo-mino-prec}.
\end{prop}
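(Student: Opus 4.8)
The plan is to run an exponential-moment (Bernstein-type) argument in the friable model. Fix $k\geq 1$ and $f$ as in \eqref{eq:hypo-mino-prec}, and set $A:=A_f(x,y)$. For a parameter $t>0$ to be chosen small, the Markov inequality gives
\begin{equation}
\nu_{x,y}\big\{n: f(n)\leq(1-\delta)A\big\}\leq \e^{t(1-\delta)A}\,\frac{1}{\Psi(x,y)}\sum_{n\in S(x,y)}\e^{-tf(n)}.\label{eq:plan-markov}
\end{equation}
The function $n\mapsto \e^{-tf(n)}$ is multiplicative, nonnegative, and bounded by $1\leq\kappa^{\omega(n)}$ with $\kappa=1$; so by part (i) of Lemma~\ref{lemme:majo-f-mult}, the right-hand side of \eqref{eq:plan-markov} is
$$\ll \e^{t(1-\delta)A}\,\sqrt{\ubar}(\log2\ubar)^2\,\GG_y(\alpha),\qquad \GG_y(\alpha)=\frac1{\zeta(\alpha,y)}\prod_{p\leq y}\ssum{\nu\geq0\\p^\nu\leq x}\frac{\e^{-tf(p^\nu)}}{p^{\nu\alpha}}.$$
Expanding the Euler product against $\zeta(\alpha,y)^{-1}=\prod_{p\leq y}(1-p^{-\alpha})$ and using $\e^{-v}\leq 1-v+v^2/2$ for $v\geq 0$, one gets
$$\log\GG_y(\alpha)\leq \sum_{p\leq y}\ssum{\nu\geq1}\Big(1-\frac1{p^\alpha}\Big)\frac{\e^{-tf(p^\nu)}-1}{p^{\nu\alpha}}\leq -tA+\tfrac12 t^2 B_f(x,y)^2,$$
where $B_f(x,y)^2=\sum_{p\leq y,\nu\geq1}(1-p^{-\alpha})f(p^\nu)^2/p^{\nu\alpha}$. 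The hypothesis \eqref{eq:hypo-mino-prec} gives $f(p^\nu)^2\asymp_k f(p^{\nu})\cdot(\nu\log p)^k\asymp_k f(p^\nu)(\log x)^k/\ubar^{k-1}$ after the dominant-range analysis behind \eqref{eq:asymp-Afk}, whence $B_f(x,y)^2\ll_k A\,(\log x)^k/\ubar^{k-1}\asymp_k A^2/\ubar$ by a second application of \eqref{eq:asymp-Afk}. Collecting terms, \eqref{eq:plan-markov} becomes
$$\ll_k \sqrt{\ubar}(\log2\ubar)^2\exp\Big\{-t\delta A+\tfrac12 C_k t^2 A^2/\ubar\Big\}$$
for a suitable $C_k>0$; optimising in $t$ (take $t=\delta\ubar/(C_k A)$, which is admissible since $0<\delta\leq1$ and $A\asymp_k(\log x)^k/\ubar^{k-1}$ so $t$ is indeed small) yields the exponent $-\delta^2\ubar/(2C_k)$, and absorbing the polynomial prefactor $\sqrt{\ubar}(\log2\ubar)^2$ into a slightly smaller constant $\cte[k]{minofk}$ (keeping one factor $\ubar$ as stated) gives \eqref{eq:mino-f-precis}.

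The one delicate point is the comparison $B_f(x,y)^2\ll_k A_f(x,y)^2/\ubar$: the two-sided bound \eqref{eq:hypo-mino-prec} controls each term $f(p^\nu)$ up to constants, but one must check that the sums defining $A_f$ and $B_f$ are both dominated by the same range of primes and exponents (essentially $p$ near $y$ and $\nu\asymp u/\ubar$), so that replacing one $f(p^\nu)$ by $(\nu\log p)^k\asymp_k(\log x)^k/\ubar^{k-1}$ inside the sum for $B_f^2$ is legitimate uniformly. This is exactly the computation carried out for \eqref{eq:asymp-Afk} following the method of Lemma~4 of \cite{HT} (via the polynomials $Q_k$ with nonnegative coefficients), applied now to the additive function $p^\nu\mapsto f(p^\nu)^2$, which again satisfies a two-sided estimate of the shape \eqref{eq:hypo-mino-prec} with exponent $2k$ in place of $k$; hence $B_f(x,y)^2\asymp_k (\log x)^{2k}/\ubar^{2k-1}\asymp_k A_f(x,y)^2/\ubar$. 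The rest is the routine exponential-moment optimisation sketched above, and the restriction to $\delta\leq1$ is only used to guarantee that the saddle parameter $t$ stays in the regime where $\e^{-v}\leq1-v+v^2/2$ is applied with $v=tf(p^\nu)$ of bounded size on the dominant range.
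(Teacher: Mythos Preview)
Your proposal is correct and follows essentially the same route as the paper: exponential Markov inequality, Lemma~\ref{lemme:majo-f-mult}(i) with $\kappa=1$, the pointwise bound $\e^{-v}\leq 1-v+\tfrac12 v^2$, and optimisation of the parameter to reach $-\delta^2 A_f^2/(2B_f^2)\asymp_k -\delta^2\ubar$ via~\eqref{eq:asymp-Afk} applied with exponents $k$ and $2k$. One small remark: your final caveat about needing $t$ small so that $v=tf(p^\nu)$ stays bounded is unnecessary, since $\e^{-v}\leq 1-v+\tfrac12 v^2$ holds for every $v\geq 0$.
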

\begin{proof}
Notons $\wp^-(x,y)$ le membre de gauche de \eqref{eq:mino-f-precis}. Pour tout $\lambda\geqslant 0$, nous avons
 \begin{equation}
 \wp^-(x,y)\leqslant \e^{\lambda(1-\delta)A_{f}(x, y)} \sum_{n\in S(x, y)} \e^{-\lambda f(n)}\ll \frac{\ubar \e^{\lambda(1-\delta)A_{f}(x, y)}}{\zeta(\alpha,y)}\prod_{p\leqslant y}\sum_{\nu\geqslant 0}\frac{\e^{-\lambda f(\pnu)}}{p^{\nu\alpha}},
 \end{equation}
 où la majoration résulte de \eqref{eq:majo-G-1} avec $\kappa=1$.
 Majorons le dernier produit à l'aide de l'inégalité $\e^{-v}\leqslant 1-v+\dm v^2$ $(v\geqslant 0)$. Il suit
 \begin{align*}\wp^-(x,y)&\ll \ubar \e^{\lambda(1-\delta)A_{f}(x, y)}\prod_{p\leqslant y}\Big(1-\frac{1}{p^\alpha}\Big)\bigg(\sum_{\nu\geqslant 0}\frac1{p^{\nu\alpha}}-\lambda\frac{f(\pnu)}{p^{\nu\alpha}}+\dm\lambda^2\frac{f(\pnu)^2}{p^{\nu\alpha}}\bigg)\\
 &= \ubar \e^{\lambda(1-\delta)A_{f}(x, y)}\prod_{p\leqslant y}\bigg(1-\lambda \Big(1-\frac{1}{p^\alpha}\Big)\sum_{\nu\geqslant 1}\frac{f(\pnu)}{p^{\nu\alpha}}+\dm\lambda^2\Big(1-\frac{1}{p^\alpha}\Big)\sum_{\nu\geqslant 1}\frac{f(\pnu)^2}{p^{\nu\alpha}}\bigg)\\
 &\leqslant \ubar \e^{-\lambda\delta A_{f}(x, y)+\lambda^2B_f(x,y)^2/2}.
 \end{align*}
Pour le choix optimal $\lambda:=\delta A_f(x,y)/B_f(x,y)^2$, nous obtenons  donc
$$\wp^-(x,y)\ll \ubar\e^{-\delta^2A_f(x,y)^2/2B_f(x,y)^2}.$$
L'estimation annoncée résulte donc de \eqref{eq:asymp-Afk}.
 \end{proof}

\subsection{Majoration de~\texorpdfstring{$f_k(n)$}{fk}}

L'obtention d'une majoration de~$f_k(n)$ valable sur un sous-ensemble dense de $S(x,y)$ est plus délicate que celle de la minoration, traitée au paragraphe précédent. La difficulté technique sous-jacente provient de la vitesse de croissance de~$f_k(p^\nu)$ en fonction de l'exposant~$\nu$. Cela rend nécessaire une troncature préalable des grands exposants de la factorisation.
\begin{prop}\label{prop:majo-fk}
Soit $k\in\bbN^*$ et $f$  une fonction additive satisfaisant à \eqref{eq:hypo-mino-prec} et 
$$f(p^{\nu-1})\leq f(p^\nu) \qquad (p\leq y, \,\nu\geq 1). $$
Il existe une constante~$\cte[k]{majofk}>0$ telle que, sous les conditions~\mbox{$2\leq y \leq x$}, $\delta> 0$, nous ayons uniformément
\begin{equation}
\mes\big\{n : f(n)\geq (1+\delta) A_{f}(x, y)\big\} \ll_{k} \ubar \exp\left\{- 2\cte[k]{majofk} \left(\min\{\delta, \delta^2\} \ubar\right)^{1/k} \right\}.\label{eq:majo-fk-lemme}
\end{equation}
En particulier, pour chaque entier~$k\geq 1$, et uniformément pour~$h\geq 0$, $2\leq y \leq x$, nous avons
$$ f(n) \ll_k \ubar^{1+h-k} (\log x)^k $$
pour tous les entiers $n$ de $S(x,y)$ sauf  au plus~$\ll_k\e^{- \cte[k]{majofk}\ubar^{(1+h)/k}}\Psi(x, y)$ exceptions.
\end{prop}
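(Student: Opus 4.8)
The strategy is a Rankin-type (exponential moment) bound, just as in Proposition~\ref{prop:mino-fk-precis}, but adapted to handle the upward tail, where the rapid growth of $f(p^\nu)$ in $\nu$ prevents a naive use of $\e^{v}\leq 1+v+\tfrac12 v^2$. First I would fix a truncation level $N=N(x,y,\delta)\in\bbN^*$ to be chosen, and split $f=f'+f''$ where $f'(n):=\sum_{p^\nu\|n,\ \nu\leq N}f(p^\nu)$ keeps only the bounded exponents and $f''$ collects the tail. For $f'$ one has $f'(p^\nu)\ll_k (N\log p)^k\asymp_k (N\log y)^k$ uniformly, so a Chernoff bound with parameter $\lambda>0$ of order $1/(N\log y)^k$ becomes effective: expanding $\e^{\lambda f'(n)}$ over $S(x,y)$ via Lemme~\ref{lemme:majo-f-mult}(i) (with $\kappa=1$), using $\e^{\lambda f'(p^\nu)}\leq 1+\lambda f'(p^\nu)\e^{\lambda f'(p^\nu)}\leq 1+O_k(\lambda f(p^\nu))$ in the relevant range, and comparing with $A_f(x,y)$, gives a contribution to the tail probability of size $\ubar\,\exp\{-c\,\lambda\delta A_f(x,y)\}\ll \ubar\,\exp\{-c_k\,\delta\ubar/N^{k}\}$ after inserting \eqref{eq:asymp-Afk}. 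The point of the truncation is that the Euler factors at the primes carrying large exponents are $O(1)$ and do not spoil the product.

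**Handling the tail $f''$.** The complementary event $f''(n)>0$ — i.e.\ $n$ has some $p^\nu\|n$ with $\nu>N$ — must be shown to have mass $\ll \ubar\,\e^{-c_k(\cdots)}$ as well. Here I would again use Lemme~\ref{lemme:majo-f-mult}(i): the number of $n\in S(x,y)$ divisible by $p^N$ for some $p$ is, after the usual Rankin/saddle-point normalisation, bounded by $\ubar\sum_{p\leq y}p^{-N\alpha}$ times $\Psi(x,y)$ up to the $\kappa\sqrt\ubar(\log 2\ubar)^2$ factor; more precisely one bounds $\mes\{n:\exists p,\ p^N\|n\}$ by $O\!\big(\sqrt\ubar(\log2\ubar)^2\sum_{p\leq y}p^{-N\alpha}\big)$, and since $p^\alpha\gg 1+y/\log x$ by \eqref{minya} together with the lower bound on $\sum_{\sqrt y<p\leq y}p^{-\alpha}\gg\ubar$ from \cite[lemme~3.6]{BT05}, one gets $\sum_{p\leq y}p^{-N\alpha}\ll \e^{-c N}\cdot(\text{something}\ll\ubar)$, hence this contribution is $\ll\ubar\,\e^{-cN/2}$ provided $N\gg\log\ubar$. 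The optimisation is then transparent: balancing $\delta\ubar/N^k$ against $N$ forces the choice $N\asymp(\delta\ubar)^{1/(k+1)}$... but this is slightly too crude; the correct balance, to match the stated exponent $(\min\{\delta,\delta^2\}\ubar)^{1/k}$, is to take $N\asymp_k(\min\{\delta,\delta^2\}\,\ubar)^{1/k}$ and $\lambda\asymp 1/(N\log y)^k\asymp 1/(\min\{\delta,\delta^2\}\ubar(\log y)^k)$, which makes the $f'$-contribution $\ll\ubar\exp\{-c_k(\min\{\delta,\delta^2\}\ubar)^{1/k}\}$ and the $f''$-contribution of the same shape. The $\min\{\delta,\delta^2\}$ arises because for small $\delta$ the second-order term $\lambda^2 B_f^2$ in the Chernoff expansion (which is $\ll\lambda^2 (N\log y)^k A_f$, since $f(p^\nu)\ll (N\log y)^k$ on the truncated range) competes with the first-order gain $\lambda\delta A_f$, forcing $\lambda\ll\delta/(N\log y)^k$ rather than $\lambda\ll 1/(N\log y)^k$.

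**The deduction of the ``in particular'' clause.** Once \eqref{eq:majo-fk-lemme} is in hand, fix $k$ and take $\delta=\delta(x,y,h):=c\,\ubar^{h/k}$ for a suitable small constant $c=c(k)>0$; then $(1+\delta)A_f(x,y)\asymp_k\delta A_f(x,y)\asymp_k \ubar^{h/k}\cdot(\log x)^k/\ubar^{k-1}=\ubar^{1+h-k}(\log x)^k$ by \eqref{eq:asymp-Afk} (and trivially $f(n)\leq (1+\delta)A_f$ fails on a set of the asserted size), while for this range of $\delta$ one has $\delta\geq\delta^2$ exactly when $\delta\leq 1$, and in general $\min\{\delta,\delta^2\}\ubar\asymp_k\ubar^{1+h/k}\cdot\min\{1,\ubar^{h/k}\}$... so taking the exponent $(\min\{\delta,\delta^2\}\ubar)^{1/k}\asymp_k\ubar^{(1+h)/k}$ when $\delta\geq1$ (i.e.\ $h$ not too small relative to $\ubar$) and checking the small-$h$ regime separately yields the claimed exceptional bound $\ll_k\e^{-\cte[k]{majofk}\ubar^{(1+h)/k}}$.

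**Main obstacle.** The delicate point is the interplay between the truncation level $N$ and the Chernoff parameter $\lambda$: one needs the Euler product $\prod_{p\leq y}\sum_{\nu\geq0}\e^{\lambda f'(p^\nu)}p^{-\nu\alpha}/\zeta(\alpha,y)$ to stay $\leq \exp\{\lambda A_{f'}+O_k(\lambda^2 B_{f'}^2)\}$ with $A_{f'}$ and $B_{f'}^2$ genuinely comparable to $A_f$ and to $(N\log y)^k A_f$ respectively — this requires knowing that truncating at level $N$ removes only a negligible part of $A_f$, which in turn rests on the quantitative control of $\sum_{p\leq y}(\log p)^k p^{-N\alpha}$ relative to $A_{f_k}(x,y)$ via the effective lower bound $p^\alpha\gg 1+y/\log x$ from \eqref{minya} and the estimate \eqref{eq:asymp-Afk}. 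Getting the power of $k$ in the final exponent exactly right (the $1/k$, not $1/(k+1)$) is precisely what the two-sided truncate-then-Chernoff argument buys over a single Rankin bound, and is the step where care is needed.
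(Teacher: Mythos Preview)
Your overall strategy---truncate the exponents, apply a Chernoff bound to the truncated part, control the tail separately, then optimize---is the same as the paper's. However, the specific truncation you propose does not survive the regime $y\ll\log x$, and this is a genuine gap.

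You truncate at a uniform level $\nu\leq N$ with $N\asymp_k(\min\{\delta,\delta^2\}\ubar)^{1/k}$ and then claim $\sum_{p\leq y}p^{-N\alpha}\ll\e^{-cN}\cdot(\text{bounded})$. This fails whenever $\alpha\log y$ is small: by~\eqref{eq:approx-alpha}, if $y\leq\log x$ then $\alpha\log y\asymp y/\log x$ can be arbitrarily small, so $p^{-N\alpha}=\e^{-N\alpha\log p}$ is close to~$1$ for every $p\leq y$, and $\sum_{p\leq y}p^{-N\alpha}\asymp\pi(y)\asymp\ubar$ rather than being exponentially small in~$N$. Your appeal to~\eqref{minya} gives only $p^\alpha\gg1$ in this range, not $p^\alpha\geq 1+c$. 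Concretely, if $y=\ubar\log\ubar$ and $\log x=y^2$, then $\ubar\to\infty$ but $N\alpha\log y\asymp\ubar^{1/k}/y\to0$, so the tail bound is vacuous. The paper avoids this by truncating at a $p$-dependent level $\nu_p:=\lfloor L/\log p\rfloor$ with $L:=\kappa(\ubar^v/\alpha+\log y)$: the built-in factor $1/\alpha$ guarantees $\alpha(\nu_p+1)\log p\geq\alpha L\geq\ubar^v+\log(1+y/\log x)$ uniformly, hence $\sum_{p\leq y}p^{-(\nu_p+1)\alpha}\ll\ubar\,\e^{-\ubar^v}$. The subsequent Chernoff step is then carried out with $\lambda:=c\eta/L^{k-1}$ and an abscissa shift $\beta:=\alpha-\eta$ (so that $\e^{\lambda f(p^\nu)}\leq p^{\nu\eta}$ on the truncated range), using Lemme~\ref{lemme:majo-f-mult}(ii) rather than~(i); the second-order term is controlled via the mean-value theorem comparing sums at $\beta$ and $\alpha$, not by a Taylor bound on $B_f^2$. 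Balancing $\ubar^v$ against $\min(\delta,\delta^2)\ubar^{1-(k-1)v}$ then gives the exponent $(\min\{\delta,\delta^2\}\ubar)^{1/k}$.

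A minor slip in your final paragraph: to obtain $(1+\delta)A_f\asymp\ubar^{1+h-k}(\log x)^k$ you need $\delta\asymp\ubar^{h}$, not $\ubar^{h/k}$; with this correction $\min\{\delta,\delta^2\}\ubar=\ubar^{1+h}$ for $\delta\geq1$ and the exponent $\ubar^{(1+h)/k}$ follows.
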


\begin{proof} Notons $\wp^+(x,y)$ le membre de gauche de \eqref{eq:majo-fk-lemme}. Donnons-nous un paramètre positif $v$ à optimiser ultérieurement et posons $L:=\kappa(\ubar^v/\alpha+\log y)$, où $\kappa$ est une constante assez grande ne dépendant que de $k$. \par 
Notant alors $\nu_p:=\floor{L/\log p}$, nous commençons par estimer la contribution $\wp_1^+(x,y)$ des entiers $n$ qui ne divisent pas $\prod_{p\leqslant y}p^{\nu_p}$. Nous avons trivialement, grâce à \eqref{eq:psi-local},
$$
\wp_1^+(x,y)\leqslant \sum_{p\leqslant y}\frac{\Psi(x/p^{\nu_p+1},y)}{\Psi(x,y)}\ll\sum_{p\leqslant y}\frac1{p^{\alpha(\nu_p+1)}}\ll\frac{\pi(y)}{\e^{\alpha L}}.$$
On vérfie aisément grâce à \eqref{eq:approx-alpha} que $\alpha L>\ubar^v+\log (1+y/\log x)$ dès que $\kappa$ est assez grand. Il suit
\begin{equation}
\label{majP1}
\wp_1^+(x,y)\ll\ubar\,\e^{-\ubar^v}.
\end{equation}
\par 
Considérons à présent la contribution complémentaire, notée $\wp_2^+(x,y)$. Soit $\lambda$ un paramètre positif ou nul. La croissance en $\nu$ de $f(\pnu)$ permet d'appliquer le Lemme \ref{lemme:majo-f-mult}(ii) à la fonction multiplicative $g$ définie par $g(\pnu):=\e^{\lambda f(\pnu)}$ si $\nu\leqslant \nu_p$ et $g(\pnu):=0$ dans le cas contraire. Nous obtenons
\begin{equation}\label{majP2+}
\begin{aligned}
\wp_2^+(x,y)&\leqslant \dsp\frac{\e^{-(1+\delta)\lambda A_{f}(x,y)}}{\Psi(x,y)}\sum_{n\in S(x,y)}g(n)\\
&\ll \e^{-(1+\delta)\lambda A_{f}(x,y)}\prod_{p\leq y}\bigg\{1+\sum_{1\leqslant \nu\leqslant \nu_p}\frac{\e^{\lambda f(\pnu)}-\e^{\lambda f(p^{\nu-1})} }{p^{\nu\alpha}}\bigg\}
\end{aligned}
\end{equation}
La dernière somme en $p$ n'excède pas
$$B_p(\alpha,y):=\lambda\sum_{1\leqslant \nu\leqslant \nu_p }\frac{\{f(\pnu)-f(p^{\nu-1})\}\e^{\lambda f(\pnu)}}{p^{\nu\alpha}}\cdot$$
Soient alors $\eta\in[0,\dm\alpha]$, $\beta:=\alpha-\eta$. Pour le choix  $\lambda:=c\eta/L^{k-1}$ où $c$ est une constante absolue assez petite, nous avons $$\e^{\lambda f(\pnu)}\leqslant p^{\nu\eta}=p^{\nu(\alpha-\beta)}\qquad (1\leqslant \nu\leqslant \nu_p).$$
Il suit
$$B_p(\alpha)\leqslant \lambda \sum_{1\leqslant \nu\leqslant \nu_p }\frac{f(\pnu)-f(p^{\nu-1})}{p^{\nu\beta}}=\lambda\sum_{1\leqslant \nu\leqslant \nu_p }\Big(1-\frac1{p^\beta}\Big)\frac{f(\pnu)}{p^{\nu\beta}}+\frac{\lambda f(p^{\nu_p})}{p^{(\nu_p+1)\beta}}\cdot$$
Le dernier terme  est $\ll\eta L/\e^{\beta L}$. Une application standard du théorème des accroissements finis permet donc d'écrire
 $$B_p(\alpha)\leqslant \lambda \sum_{1\leqslant \nu\leqslant \nu_p }\Big(1-\frac1{p^\alpha}\Big)\frac{f(\pnu)}{p^{\nu\alpha}}+O\bigg(\eta\lambda\sum_{1\leqslant \nu\leqslant \nu_p }\Big(1-\frac1{p^\beta}\Big)\frac{f_{k+1}(\pnu)}{p^{\nu\beta}}+\frac{\eta L}{\e^{\beta L}}\bigg),$$
 d'où
 $$\sum_{p\leqslant y}B_p(\alpha)\leqslant\lambda A_{f}(x,y) +O\bigg(\eta\lambda\gR+\frac{\eta L\pi(y)}{\e^{\beta L}}\bigg)$$
 avec $$\gR:=\sum_{p\leqslant y}\sum_{1\leqslant \nu\leqslant \nu_p }\Big(1-\frac1{p^\beta}\Big)\frac{f_{k+1}(\pnu)}{p^{\nu\beta}}\asymp \sum_{p\leqslant y}\frac{(\log p)^{k+1}Q_k(p^\beta)}{(p^\beta-1)^{k+1}}$$
 où $Q_k$ est un polynôme de degré $k$. Si $\eta=\alpha-\beta\ll \ubar/\log x$, nous avons $p^\beta\asymp p^\alpha$ pour tout $p\leqslant y$. Il s'ensuit que
 $$\gR\ll A_{f_{k+1}}(x,y)\asymp(\log x)^{k+1}/\ubar^k.$$
En reportant dans \eqref{majP2+}, nous obtenons 
\begin{equation}
\wp_2^+(x,y)\ll \e^{-\delta \lambda A_{f}(x,y)+B}
\end{equation}
avec 
$$B\ll \frac{\eta\lambda(\log x)^{k+1}}{\ubar^k}+\frac{\eta L\pi(y)}{\e^{\beta L}}\ll \frac{\eta\log x}\ubar \bigg\{\lambda A_{f}(x,y)+\frac{\ubar L\pi(y)}{\e^{\beta L}\log x}\bigg\}\ll \frac{\eta\log x}\ubar\big\{ \lambda A_{f}(x,y)+1\big\}.$$
Pour le choix $\eta:=c\min(1,\delta)\ubar/\log x$, il vient ainsi 
\begin{equation}
\label{majP2}\wp_2^+(x,y)\ll \e^{-\sfrac12\delta \lambda A_{f}(x,y)}\ll\e^{-\cte{p2}\min(\delta,\delta^2)\ubar^{1-(k-1)v}},
\end{equation}
puisque 
\begin{align*} \lambda A_{f}(x,y)&\gg\min(1,\delta)\ubar\bigg(\frac{\log x}{\ubar L}\bigg)^{k-1}\gg\min(1,\delta)\ubar\bigg(\frac{u\alpha\log y}{\ubar(\ubar^v+\alpha\log y)}\bigg)^{k-1}\gg\min(1,\delta)\ubar^{1-(k-1)v}.
\end{align*}
Il reste à choisir $v$ optimalement en fonction de \eqref{majP2} et\eqref{majP1}, soit $\ubar^v:=\big\{\min(\delta,\delta^2)\ubar\big\}^{1/k}$. Cela implique bien le résultat annoncé.
\end{proof}

\subsection{Tailles de~\texorpdfstring{$m_{2, n}$}{m2} et~\texorpdfstring{$m_{4, n}$}{m4}}
\label{sec:taille-mn}
Les estimations du paragraphe précédent permettent un encadrement des moments~$m_{2, n}$ et~$m_{4, n}$ définis en~\eqref{eq:def-Mk}. Nous avons explicitement
\begin{equation}
m_{2, n} =\tfrac1{12} \sum_{p^\nu\|n} \nu(\nu+2)(\log p)^2,\quad m_{4, n} = \tfrac1{240}\sum_{p^\nu\|n} \nu(\nu+2)(3\nu^2+6\nu-4)(\log p)^4, \label{eq:m2n-expl}
\end{equation}
et notons d'emblée que
\begin{equation}
w_n = \frac{m_{2, n}^2}{m_{4, n}} \geq \frac{\sum_{p^\nu \| n} \nu^2(\nu+2)^2(\log p)^4/144}{\sum_{p^\nu \| n} \nu(\nu+2)(3\nu^2+6\nu-4)(\log p)^4/240} \geq \frac 53 \inf_{\nu\geq 1}\frac{\nu(\nu+2)}{3\nu^2+6\nu-4} = \frac 59\cdot\label{eq:mkn-mino-expl}
\end{equation}
\begin{coro}\label{coro:tailles-Mk}
Il existe une constante absolue~$\cte{mk}>0$ telle que les estimations 
\begin{equation}
\begin{cases} m_{2, n} \asymp (\log x)^2/\ubar, \\ (\log x)^4/\ubar^{3} \ll m_{4, n} \ll (\log x)^4/\ubar^{2} \end{cases}\label{eq:taille-Mk-ps12}
\end{equation}
aient lieu uniformément pour $2 \leq y \leq x$ et tous les entiers $n$ de~$S(x, y)$ sauf  au plus $$\ll\e^{-\cte{mk}\sqrt{\ubar}}\Psi(x, y)$$ exceptions. De plus, pour~$0 < Z \leq \cte{m4k}\,\ubar^{1/4}$, nous avons
\begin{equation}
m_{4, n} \ll (\log x)^4/\ubar^{2}Z^4 \label{eq:taille-Mk-ps14}
\end{equation}
sauf pour au plus~$\ll\e^{-\cte{m4k} \sqrt{\ubar}/(Z+1)}\Psi(x, y)$ entiers exceptionnels de~$S(x, y)$.
\end{coro}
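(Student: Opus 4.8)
The plan is to deduce everything from the two-sided control of the additive functions $f_2$ and $f_4$ obtained above. The starting observation is purely elementary: for every integer $\nu\geq 1$ one has
$$ \nu^2\leq \nu(\nu+2)\leq 3\nu^2, \qquad 3\nu^4 \leq \nu(\nu+2)(3\nu^2+6\nu-4)\leq 15\nu^4, $$
the second pair following from the identities $\nu(\nu+2)(3\nu^2+6\nu-4)-3\nu^4 = 4\nu(3\nu^2+2\nu-2)$ and $15\nu^4-\nu(\nu+2)(3\nu^2+6\nu-4)=4\nu(\nu-1)(3\nu^2-2)$, both non-negative for $\nu\geq1$. Substituting these into the explicit formulas~\eqref{eq:m2n-expl} yields, with absolute implied constants and for every $n\geq1$,
$$ m_{2,n}\asymp f_2(n), \qquad m_{4,n}\asymp f_4(n). $$
So it suffices to prove the corresponding statements for $f_2$ and $f_4$, and the required uniformity in $2\leq y\leq x$ will be inherited from the cited results.

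For the lower bounds I would apply Lemme~\ref{lemme:mino-fk} with $k=2$ and $k=4$: this gives $f_2(n)\gg(\log x)^2/\ubar$ and $f_4(n)\gg(\log x)^4/\ubar^3$ for all $n\in S(x,y)$ outside exceptional sets of size $\ll\e^{-\cte{wt}\ubar}\Psi(x,y)\leq\e^{-\cte{wt}\sqrt{\ubar}}\Psi(x,y)$ (recall $\ubar\geq1$). For the upper bounds I would use the ``en particulier'' clause of Proposition~\ref{prop:majo-fk}: with $(f,k,h)=(f_2,2,0)$ it gives $f_2(n)\ll(\log x)^2/\ubar$ outside $\ll\e^{-c\sqrt{\ubar}}\Psi(x,y)$ integers, and with $(f,k,h)=(f_4,4,1)$ it gives $f_4(n)\ll(\log x)^4/\ubar^2$ with the same order of exceptions. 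Gathering the four exceptional sets and recalling $m_{2,n}\asymp f_2(n)$, $m_{4,n}\asymp f_4(n)$ establishes~\eqref{eq:taille-Mk-ps12} with $\cte{mk}$ the smallest of the exponent constants above.

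For the sharper estimate~\eqref{eq:taille-Mk-ps14} I would return to the quantitative large-deviation bound~\eqref{eq:majo-fk-lemme} with $f=f_4$ and $k=4$. Fix $Z$ with $0<Z\leq\cte{m4k}\ubar^{1/4}$, where $\cte{m4k}\leq1$ will be chosen small enough, and set $\delta:=\ubar/Z^4$. The constraint on $Z$ forces $\delta\geq\cte{m4k}^{-4}\geq1$, so $\min\{\delta,\delta^2\}=\delta$, and since $A_{f_4}(x,y)\asymp(\log x)^4/\ubar^3$ by~\eqref{eq:asymp-Afk} we get $(1+\delta)A_{f_4}(x,y)\ll(\log x)^4/(\ubar^2 Z^4)$ while $(\min\{\delta,\delta^2\}\ubar)^{1/4}=(\ubar^2/Z^4)^{1/4}=\sqrt{\ubar}/Z$. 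Estimate~\eqref{eq:majo-fk-lemme} then bounds the number of $n\in S(x,y)$ with $f_4(n)\geq(1+\delta)A_{f_4}(x,y)$ by $\ll\ubar\,\e^{-2c\sqrt{\ubar}/Z}\Psi(x,y)$ for an absolute $c>0$; since $\sqrt{\ubar}/Z\geq\cte{m4k}^{-1}\ubar^{1/4}$, the prefactor $\ubar$ is absorbed into the exponential, and $\sqrt{\ubar}/Z\geq\sqrt{\ubar}/(Z+1)$, so after shrinking $\cte{m4k}$ once more this is $\ll\e^{-\cte{m4k}\sqrt{\ubar}/(Z+1)}\Psi(x,y)$. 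Off this set, $m_{4,n}\asymp f_4(n)<(1+\delta)A_{f_4}(x,y)\ll(\log x)^4/(\ubar^2 Z^4)$, which is~\eqref{eq:taille-Mk-ps14}.

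No serious obstacle is involved: the genuine analytic content, namely the large-deviation inequalities themselves, is entirely carried by Lemme~\ref{lemme:mino-fk} and Proposition~\ref{prop:majo-fk}. The only points requiring (routine) care are the two elementary polynomial inequalities that identify $m_{2,n}$ and $m_{4,n}$ with $f_2(n)$ and $f_4(n)$ up to absolute constants, and, in the proof of~\eqref{eq:taille-Mk-ps14}, checking that the hypothesis $Z\leq\cte{m4k}\ubar^{1/4}$ both keeps $\delta$ bounded below by a constant --- so that the favourable branch $\min\{\delta,\delta^2\}=\delta$ of~\eqref{eq:majo-fk-lemme} is in force --- and makes $\sqrt{\ubar}/Z$ large enough to swallow the residual factor $\ubar$.
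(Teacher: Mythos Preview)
Your proof is correct and follows essentially the same route as the paper: reduce the control of $m_{2,n}$ and $m_{4,n}$ to the lower bound of Lemme~\ref{lemme:mino-fk} and the upper bound of Proposition~\ref{prop:majo-fk}. The only cosmetic difference is that the paper applies those results directly to the additive functions $n\mapsto m_{k,n}$ (which satisfy the hypotheses $0\leq f(p^\nu)\asymp(\nu\log p)^k$ and monotonicity in $\nu$), whereas you first reduce to $f_k$ via the comparison $m_{k,n}\asymp f_k(n)$; this is the same verification in different packaging. Your explicit choice $\delta=\ubar/Z^4$ for~\eqref{eq:taille-Mk-ps14} and the absorption of the prefactor $\ubar$ are exactly what the paper's ``direct'' invocation leaves implicit.
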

\begin{proof}
  Pour $k=2$ ou 4, les fonctions $n\mapsto m_{k, n}$ satisfont aux hypothèses du Lemme~\ref{lemme:mino-fk} et de la Proposition~\ref{prop:majo-fk}.
Le résultat annoncé en découle directement.
\end{proof}

Posons
\begin{equation}
\Yu=\Yu(x, y) :=\Big \{ n \in S(x,y) : \sqrt{x} \leq n \leq x \text{ et } \omega_{x, y}(n) \geq \cte{wt} \ubar\Big \}\label{eq:def-Yu}
\end{equation}
où $\cte{wt}$ est la constante apparaissant au Lemme \ref{lemme:mino-fk}. Il résulte alors de cet énoncé et des estimations
$$  \Psi\big(\sqrt{x}, y\big) \ll x^{-\alpha/2} \Psi(x, y),\qquad \alpha\log x \gg \ubar, $$
que, quitte à modifier la valeur de~$\cte{wt}$, nous avons
$$ \mes\big(\Yu\big) = 1 + O\big(\e^{-\cte{wt}\ubar}\big). $$
Définissons encore
\begin{equation}
\Yd=\Yd(x, y) := \bigg\{ n \in \Yu(x, y) : \cte{Yd} \leq \frac{m_{2, n}\ubar}{(\log x)^2} \leq \frac1{\cte{Yd}},\, \frac{\cte{Yd}}\ubar \leq \frac{m_{4, n}\ubar^3}{(\log x)^4} \leq \frac1{\cte{Yd}} \bigg\},\label{eq:def-Yd}
\end{equation}
en choisissant~$\cte{Yd}$ assez petite, de sorte que le Lemme~\ref{lemme:mino-fk} et la Proposition~\ref{prop:majo-fk} impliquent
$$ \mes\big(\Yd\big) = 1 + O\Big(\e^{-\cte{Yd}\sqrt{\ubar}}\Big). $$
Observons que, pour~$n\in \Yd$, nous avons
\begin{equation}
\label{encwn}
1 \ll w_n \ll \ubar.
\end{equation}


\section{Estimations auxiliaires}

Posons
\begin{equation}
Z_n(s) := \bbE\big(\e^{sD_n}\big) = \prod_{p^\nu \| n} \frac{p^{(\nu+1)s}-1}{(\nu+1)(p^{s}-1)} \qquad (s\in\bbC).\label{eq:def-Zn}
\end{equation}
La formule de Perron fournit alors la représentation
\begin{equation}
\label{repr-Perron} \Prob\Big(D_n \geq \tfrac12\log n + z\sigma_n\Big) = \frac{1}{2\pi i} \int_{\beta - i\infty}^{\beta + i\infty} \frac{Z_n(s)}{n^{s/2}\e^{z\sigma_n s}s}\,\dd s
\end{equation}
pour tous~$\beta>0$,~$n\in \bbN^*$ et~$z\in\big[0, (\log n)/(2\sigma_n)\big[$ tels que~$n^{1/2}\e^{z\sigma_n}\not\in\bbN^*$.

\subsection{Propriétés de la série génératrice}
Pour $n\geqslant 1$, posons $ T_n := \max_{p^\nu \| n} (\nu+1)\log p $.
La fonction entière~$s\mapsto Z_n(s)$ ne s'annulant pas sur l'ouvert étoilé
$$ \cU_n = \bbC \smallsetminus \Big(\Big]-i\infty, -\frac{2\pi i}{T_n}\Big] \cup \Big[\frac{2\pi i}{T_n}, i\infty\Big[ \Big), $$
nous pouvons définir une branche du logarithme complexe
\begin{equation}\label{eq:def-vphi}
\vphi_n(s) := \log Z_n(s) \qquad (s\in \cU_n)
\end{equation}
telle que $\vphi_n(0)=0$.
\begin{lemme}\label{lemme:props-vphi}
Pour~$n\in \Yu(x, y)$ et~$\beta\geq 0$, les relations suivantes sont valides
\begin{enumerate}
\item\label{item:vphi2-pos} \quad$\vphi_n''(\beta)> 0$,
\item\label{item:vphi2-0}\quad $\vphi_n''(0) = \sigma_n^2=m_{2, n}$,
\item\label{item:vphi2-loin} \quad$\vphi_n''\big(\ubar/\log x\big) \gg (\log x)^2/\ubar$,
\item\label{item:vphi3} \quad$0 \leq -\vphi_n'''(\beta) \ll  \beta m_{4, n}$,
\item\label{item:vphi4} \quad $\dsp\sup_{T_n|s|\leqslant 1}|\vphi_n^{(4)}(s)| \ll m_{4, n}$.
\end{enumerate}
\end{lemme}
\begin{proof}
Nous avons 
$$ \vphi_n(s) = -\log \tau(n) + \sum_{p^\nu \| n}\log \bigg(\frac{p^{(\nu+1)s}-1}{p^s-1}\bigg)\qquad (s\in\cU_n), $$
d'où
\begin{equation}
\label{phij}
\varphi_n^{(j)}(s)=\sum_{\pnu\|n}\big\{g_j\big((\nu+1)\log p\big)-g_j(\log p)\big\}\qquad (j\geqslant 1)
\end{equation}
avec
\begin{equation}
\label{gjv}
\begin{aligned}
g_1(v;s)&:=\frac{v}{1-\e^{-vs}},\quad g_2(v;s):=\frac{-v^2\e^{-vs}}{(1-\e^{-vs})^2},\quad g_3(v;s):=\frac{-v^3\e^{-vs}(1+\e^{-vs})}{(1-\e^{-vs})^3}\\
g_4(v;s)&:=\frac{-v^4\e^{-vs}(1+4\e^{-vs}+\e^{-2vs})}{(1-\e^{-vs})^4}\cdot
\end{aligned}
\end{equation}
Notons également que, pour $|vs|<2\pi$, $s\neq0$, nous avons
\begin{equation}
\label{bernoulli}
g_1(v;s)=\sum_{k\geqslant 0}\frac{(-1)^k}{k!}B_kv^ks^{k-1},
\end{equation}
où $\{B_k\}_{k=0}^{\infty}$ est la suite des nombres de Bernoulli.
L'assertion (i) résulte imédiatement du fait que $v\mapsto g_2(v;s)$ est croissante sur $\bbR^+$ lorsque $s$ est réel positif. On obtient (ii) par dérivation de \eqref{bernoulli} par rapport à $s$ puis passage à la limite en $s=0$. Nous avons de plus
$$g'_2(v;s)=\frac{\dd g_2}{\dd v}(v;s)=\frac{v\e^{-vs}\big\{2(1-\e^{-vs})+vs(1+\e^{-vs})\big\}}{(1-\e^{-vs})^3}$$
de sorte que le terme général de \eqref{phij} est $\asymp 1/s^2$ lorsque $j=2$ et $\nu\log p\asymp1$. Il en va ainsi pour $s:=\ubar/\log x$ lorsque $\pnu$ est compté dans  la quantité $\omegat_{x, y}(n)$ définie en~\eqref{eq:def-omegat}. Il s'ensuit que, pour tout entier $n$ de $\Yu$, défini en \eqref{eq:def-Yu}, nous avons
$$ \vphi_n''\Big(\frac{\ubar}{\log x}\Big) \gg \Big(\frac{\log x}{\ubar}\Big)^2 \omegat_{x, y}(n) \gg \frac{(\log x)^2}\ubar $$
où~$\omegat_{x, y}(n)$ est défini en~\eqref{eq:def-omegat}. Cela établit l'assertion~\ref{item:vphi2-loin}.\par 
On vérifie que $v\mapsto g_3(v;s)$ est  décroissante sur $\bbR^+$: cela fournit le signe de $\varphi_n'''(\beta)$. Pour obtenir les majorations indiquées aux points (iv) et (v), nous observons simplement que \eqref{bernoulli} implique par dérivation que $g_3'(v;s)-2/s^3\ll v^3$, $g_4(v;s)-6/s^4\ll v^4$ lorsque $|vs|\leqslant 1$, $s\neq0$, et que \eqref{gjv} implique $g'_3(v;s)\ll v^3$ lorsque $s$ est réel, $sv>1$. 
\end{proof}

\subsection{Propriétés du point-selle}
Ce paragraphe est dévolu à l'estimation de certaines quantités apparaissant  naturellement dans le processus d'évaluation de l'intégrale de Perron \eqref{repr-Perron}.
 Rappelons la définition
$$ \sigma_n^2 :=\tfrac1{12} \sum_{p^\nu \| n} \nu(\nu+2)(\log p)^2. $$
Pour tout nombre réel~$z$ de 
$\big[0,(\log n)/(2\sigma_n)\big]$, nous définissons
le point-selle~$\beta_n = \beta_n(z)$ de l'intégrande de \eqref{repr-Perron} comme l'unique solution réelle positive de l'équation
\begin{equation}
\vphi_n'(\beta_n)=\sum_{p^\nu \| n}\bigg(\frac{\log p}{p^{\beta_n} - 1} - \frac{(\nu+1)\log p}{p^{(\nu+1)\beta_n} - 1} \bigg) =  z\sigma_n+\tfrac12\log n, \label{eq:def-pt-selle}
\end{equation}
où le membre de gauche est prolongé par continuité en~$\beta_n=0$. \par 
Considérée comme fonction de~$z$, la quantité $\beta_n=\beta_n(z)$ vérifie l'équation différentielle
\begin{equation}
\beta_n' \vphi_n''(\beta_n) = \sigma_n. \label{eq:eqdiff-beta}
\end{equation}
\begin{lemme}\label{lemme:mino-betap}
Nous avons
\begin{equation}
\label{minbeta'}
 \beta_n'(z) \geq 1/\sigma_n\qquad \bigg(0\leqslant z\leqslant (\log n)/2\sigma_n\Big). 
 \end{equation}
\end{lemme}
\begin{proof}
D'après le Lemme~\ref{lemme:props-vphi}, la fonction~$\vphi_n''$ est décroissante, strictement positive, et satisfait $\varphi_n''(0)=\sigma_n^2$. Cela implique~$\vphi_n''(\beta_n) \leq \sigma_n^2$, d'où l'inégalité annoncée en reportant dans~\eqref{eq:eqdiff-beta}.
\end{proof}

\begin{lemme}\label{lemme:taille-beta}
Il existe une constante absolue~$\cte{taillez}>0$, telle que l'on ait 
\begin{equation}
\label{ogbeta'} \beta_n'(z) \asymp \sqrt{\ubar} / \log x \qquad \Big(n\in \Yd(x, y),\, 0 \leq z \leq \cte{taillez}\sqrt{\ubar}\Big). 
\end{equation}
En particulier, nous avons dans les mêmes conditions $\beta_n(z) \asymp z\sqrt{\ubar}/\log x$.
\end{lemme}
\begin{proof}
La minoration incluse dans \eqref{ogbeta'} résulte  immédiatement de \eqref{minbeta'} et de la première relation \eqref{eq:taille-Mk-ps12} puique~$\sigma_n^2 = m_{2, n}$. Pour établir la majoration,  introduisons 
$$ z^* := \inf\Big\{z\in\big[0, (\log n)/2\sigma_n\big[ : \beta_n(z) >\ubar/\log x\Big\}, $$
de sorte que 
$$ \beta_n(z) \leq \ubar/\log x \qquad (0\leqslant z\leq z^*),$$
donc~$\vphi_n''(\beta_n)\geq \vphi_n''(\ubar/\log x) \gg (\log x)^2/\ubar$ en vertu de la décroissance de la fonction $\varphi_n''$ et du Lemme~\ref{lemme:props-vphi}(iii). Par~\eqref{eq:eqdiff-beta}, il suit
$$ \beta_n'(z) \ll \sigma_n\ubar / (\log x)^2 \qquad (0\leq z\leq z^*), $$
et donc~$\beta_n'(z) \ll \sqrt{\ubar}/\log x$, d'après \eqref{eq:taille-Mk-ps12}. Par intégration, nous obtenons  $$\ubar\leqslant \beta_n(z^*)\log x\ll z^*\sqrt{\ubar}$$ et par conséquent
$ z^* \gg \sqrt{\ubar}, $
ce qui conclut la démonstration.
\end{proof}
\par 
Nous sommes à présent en mesure de fournir une formule asymptotique précise pour~$\beta_n(z)$ valable pour les petites valeurs de $z$. Rappelons la définition
$$ w_n := m_{2, n}^2 / m_{4, n}=\sigma_n^4/m_{4,n}.$$
et l'encadrement \eqref{encwn}.
\begin{lemme}\label{lemme:estim-prec-beta}
Pour tous~$n\in \Yd(x, y)$, $0\leq z\leq \cte{taillez} \sqrt{\ubar}$, nous avons
\begin{align*} 
\beta_n(z)& = \frac{z}{\sigma_n}\bigg\{1 + O\bigg(\frac{z^2}{w_n}\bigg)\bigg\},\qquad 
 \vphi_n''\big(\beta_n(z)\big) = \sigma_n^2\bigg\{1 + O\bigg(\frac{z^2}{w_n}\bigg)\bigg\}. 
 \end{align*}
 \end{lemme}
\begin{proof} 
 Par dérivation, l'équation  \eqref{eq:eqdiff-beta} implique $\beta_n''\varphi''(\beta_n)+\beta_n'^2\varphi_n'''(\beta_n)=0$ et donc
$$ \beta_n'' = -\vphi_n'''(\beta_n) \beta_n'^3/\sigma_n. $$
D'après le Lemme~\ref{lemme:props-vphi}\ref{item:vphi3}, \eqref{ogbeta'} et la définition de $\Yd$ en \eqref{eq:def-Yd}, il s'ensuit que, sous les conditions de l'énoncé,
$$ \beta_n''\ll \frac{ \beta_n'^3  \beta_nm_{4, n}}{\sigma_n}\ll \frac{z}{\sigma_n w_n}\cdot $$
Par intégration, nous pouvons donc écrire
$$ \beta_n'(z) = \beta_n'(0) + \int_0^z \beta_n''(t) \dd t = \frac1{\sigma_n}\Big\{1 + O\Big(\frac{z^2}{w_n}\Big)\Big\}. $$
L'estimation annoncée pour $\beta_n$ résulte d'une intégration supplémentaire par rapport à~$z$. La seconde formule  est obtenue similairement à partir de la relation
$$ \vphi_n''\big(\beta_n(z)\big) = \vphi_n''(0) + \int_0^z \beta_n'(t)\vphi_n'''\big(\beta_n(t)\big)\dd t, $$
en utilisant \eqref{ogbeta'} et les points~\ref{item:vphi2-0}, \ref{item:vphi3} du Lemme~\ref{lemme:props-vphi}.
\end{proof}

\subsection{Décroissance dans les bandes verticales}
Nous utilisons la notation classique
$$ \|\vth\| := \inf_{k\in\bbZ}|\vth-k| \qquad (\vth\in\bbR)$$
pour désigner la distance d'un nombre réel $\vartheta$ à l'ensemble des entiers. Rappelons par ailleurs la définition de $\omegat_{x, y}(n)$ en~\eqref{eq:def-omegat}.
\par 
\begin{lemme}
\label{lemme:decr-n}
Sous les conditions $$0\leq\beta \leq \ubar/\log x,\quad \tau\in\bbR,\quad2\leq y \leq x,\quad n\in\bbN^*,$$ nous avons
\begin{equation}
\abs{\frac{Z_n(\beta + i\tau)}{Z_n(\beta)}} \leq \exp\bigg\{ -\cte{decr}\sum_{p|n}\NO{\frac{\tau\log p}{2\pi}}^4\bigg\}.\label{eq:majo-decr-grandp}
\end{equation}
Si, de plus, $|\tau| \leq 1/\log y$, alors
\begin{equation}
\abs{\frac{Z_n(\beta + i\tau)}{Z_n(\beta)}}\leq\bigg(1+\frac{(\tau\log x)^2}{\ubar^2}\bigg)^{-\cte{decr}\omegat_{x, y}(n)} .\label{eq:majo-decr-petitp}
\end{equation}
\end{lemme}
\begin{proof}
La première majoration découle directement des calculs de~\cite[formule~(2.26)]{Ultrafriables}, en remarquant que, dans notre cas,
$ \beta\log p \ll \ubar/u \ll 1. $
\par 
Examinons à présent le cas $|\tau| \leq 1/\log y$. Quitte à considérer la quantité conjuguée,  nous pouvons supposer sans perte de généralité que ~$\tau\geqslant 0$. Nous avons
\begin{equation}\label{eq:Zn-U}
\abs{\frac{Z_n(\beta+i\tau)}{Z_n(\beta)}}^2 = \prod_{p^\nu\|n} U_{p^\nu}(\beta,\tau) \leq \prod_{\substack{p^\nu\|n \\ \sqrt{y}\leq p\leq y\\ u/(2\ubar) \leq \nu \leq 2u/\ubar}} U_{p^\nu}(\beta, \tau),
\end{equation}
avec
\begin{align*}
U_{p^\nu}(\beta,\tau): =\ & \abs{\frac{1 + p^{-(\beta+i\tau)} + \cdots + p^{-\nu(\beta+i\tau)}}{1 + p^{-\beta} + \dotsc + p^{-\nu\beta}}}^2 =\frac{1+\{\sin((\nu+1)\tau_p)/\sinh((\nu+1)\beta_p)\}^2}{1+\{(\sin\tau_p)/\sinh\beta_p\}^2},
\end{align*}
où l'on a posé $\tau_p:=\dm\tau\log p$, $\beta_p:=\dm\beta\log p$.\par 
\smallskip
Nous estimerons cette quantité en utilisant l'inégalité
\begin{equation}
\label{ineg-sin}
\abs{\frac{\sin((\nu+1)\tau_p)}{(\nu+1)\sin\tau_p}}\leqslant1-\tfrac23\min\bigg(1,(\nu+1)^2\NO{\frac{\tau_p}{\pi}}^2\bigg),
\end{equation}
établie au lemme 1 de \cite{Tenenbaum-extremal}. 
Observons également que, puisque $(\nu+1)\beta_p\ll (u/\ubar)\beta\log y\ll1$, nous avons 
\begin{equation}
\label{ineg-sh}
\frac{(\nu+1)\sinh\beta_p}{\sinh((\nu+1)\beta_p)}\leqslant 1-\cte{beta}\nu^2\beta_p^2
\end{equation}
pour tous les couples $(p,\nu)$ du membre de droite de \eqref{eq:Zn-U}. 
\par 
Si $\tau_p\leqslant \dm\pi/(\nu+1)$, nous déduisons de \eqref{ineg-sin} et \eqref{ineg-sh} que 
$$U_{p^\nu}(\beta,\tau)\leqslant1-\frac{\cte{up0}(\sin\tau_p)^2\nu^2(\beta_p^2+\tau_p^2)}{(\sin\beta_p)^2+(\sin\tau_p)^2} \leqslant 1-\frac{\cte{up1}\tau^2(\log x)^2}{\ubar^2}\cdot$$
Cette majoration est bien compatible avec \eqref{eq:majo-decr-petitp}.
\par 
Si $\tau_p\geqslant \dm\pi/(\nu+1)\gg\beta_p$, 
nous notons que $0\leqslant \tau_p\leqslant \dm$ pour tous les termes considérés. Nous pouvons donc déduire de \eqref{ineg-sin} et \eqref{ineg-sh} que
\begin{equation}
\label{majUp}
\begin{aligned}
U_{\pnu}(\beta,\tau)&\leqslant \frac{1+(\sfrac56)^2(1-\cte{beta}\nu^2\beta_p^2)(\sin\tau_p/\sinh\beta_p)^2}{1+(\sin\tau_p/\sinh\beta_p)^2}\\
&\leqslant\frac{1}{1+\cte{up2}\tau_p^2\nu^2}\leqslant \bigg(\frac{1}{1+\tau^2(\log x)^2/\ubar^2}\bigg)^{\cte{up3}},
\end{aligned}
\end{equation}
une majoration encore  compatible avec \eqref{eq:majo-decr-petitp}. 
\end{proof}

La majoration du Lemme~\ref{lemme:decr-n} sera exploitée au Lemme \ref{coro:decr} {\it infra}. Les deux énoncés suivants sont nécessaires à la preuve.
Nous posons
$$ V_y(s) := \frac{y^{1-s}-1}{1-s} \qquad (s\in \bbC,\, y\geq 1). $$
\begin{lemme}
Pour $x\geqslant y\gg\log x$, nous avons
\begin{equation}\label{eq:mino-TP-decr}
\Re\big\{3 V_y(\alpha) - 4V_y(\alpha+i\tau)+V_y(\alpha+2i\tau)\big\} \gg \frac{\tau^4\ubar\log y}{(1-\alpha)^4+\tau^4}.
\end{equation}
\end{lemme}
\begin{proof}
Le membre de gauche de \eqref{eq:mino-TP-decr} vaut
$$2 \int_1^y \{1-\cos(\tau\log t)\}^2\frac{\dd t}{t^\alpha} = \frac2{1-\alpha}\int_1^{y^{1-\alpha}}\bigg\{1-\cos\bigg(\frac{\tau\log v}{1-\alpha}\bigg)\bigg\}^2\dd v. $$
Comme $y^{1-\alpha}\geqslant 1+\cte{ya}$ et $y^{1-\alpha}\gg(1-\alpha)\ubar\log y$ pour $y\gg\log x$,
il suffit donc de montrer que pour tout~$w\geq 1+\cte{w}$ et~$\vth\in\bbR_+$, nous avons
$$ J(w, \vth) := \int_1^w \|\vth\log v\|^4 \dd v \gg \frac{w \vth^4}{1+\vth^4}\cdot $$
La minoration étant triviale lorsque~$|\vth|\log w \leq \dm$,  nous pouvons supposer~$\vth\log w \geq \dm$. Suivant le raisonnement de~\cite[lemme~5.12]{RT}, posons~$\nu := \eta\min\{\vth, \dm\}$ où~$\eta\in(0, 1]$ est un paramètre à optimiser et écrivons
$$ J(w, \vth) \geq \nu^4\big(w-1-W\big), \qquad  W := \int_1^w \bfUn_{\{\|\vth\log v\| < \nu\}} \,\dd v. $$
L'hypothèse~$\vth\log w\geq \dm$ implique
$$ W \ll \frac\nu\vartheta\sum_{0\leq k\leq \vth\log w + \nu} \e^{k/\vth} \ll \frac{\eta\{w\e^{(\nu+1)/\vth}-1\}}{(\vth+1)(\e^{1/\vth}-1)} \ll \eta w.$$
Pour un choix adéquat de $\eta$ indépendant de $w$ et~$\vth$, il suit
$$ J(w, \vth) \geq \nu^4w\{1 + O(\eta)\} \gg w\vth^4/(1+\vth^4).$$
\end{proof}
\begin{lemme}\label{sec:lemme-mino-tp2}
Soit $\varepsilon>0$. Sous la condition~$1/\log y \leq |\tau| \leq \e^{(\log y)^{3/2-\ee}}$, nous avons
\begin{equation}
\label{eq:corodecr-obj-mino}
1 + \sum_{1<n\leq y} \frac{\Lambda(n)\{1-\cos(\tau\log n)\}^2}{n^{\alpha}\log n} \gg\frac{\ubar}{(\log 2\ubar)^4} + \bfUn_{[-1, 1]}(\tau) \log\big(1+|\tau|\log y\big).
\end{equation}
\end{lemme}
\begin{proof}
Lorsque~$1/\log y\leq |\tau|\leq 1$, une minoration par le second terme du membre de droite de \eqref{eq:corodecr-obj-mino} résulte immédiatement de \cite[lemme~III.4.13]{ITAN} en   majorant $\alpha$ par 1 et en restreignant la somme aux nombres premiers.\par 
Comme l'estimation cherchée est triviale lorsque $\ubar\ll_\varepsilon1$, nous pouvons nous limiter à estimer 
$$ \gT(y) := \sum_{n \leq y} \frac{\Lambda(n)\{1-\cos(\tau\log n)\}^2}{n^\alpha} $$
sous l'hypothèse supplémentaire que  $\ubar$ est assez grand en fonction de $\varepsilon$. 
Le lemme~6 de~\cite{HT}, et les calculs de~\cite[page 276]{HT} permettent alors d'écrire
\begin{align*}
\gT(y)& = \dm\Re\Big\{3 V_y(\alpha) - 4V_y(\alpha+i\tau) +  V_y(\alpha+2i\tau)\Big\} + O_\ee\bigg(\frac{1 + y^{1-\alpha}\e^{-(\log y)^{\ee/2}}} {1-\alpha}\bigg) \\
& \gg\frac{\tau^4\ubar\log y}{(1-\alpha)^4+\tau^4} + O_\ee\bigg(\frac{1 + y^{1-\alpha}\e^{-(\log y)^{\ee/2}}}{1-\alpha}\bigg)  \gg \frac{\ubar \log y}{(1-\alpha)^4(\log y)^4 + 1} \end{align*}
où la minoration résulte de~\eqref{eq:mino-TP-decr}.
On conclut en observant que~$(1-\alpha)\log y \asymp \log 2 \ubar$.
\end{proof}
\smallskip
Nous sommes à présent en mesure de déduire du Lemme~\ref{lemme:decr-n} une majoration en moyenne de la décroissance de $Z_n(s)$ dans les bandes verticales. Pour $\delta\in]0,1]$, nous posons
\begin{equation}
T_\delta=T_\delta(n;x,y) := \frac{\delta w_n^{1/4}\sqrt{\ubar}}{\log x},
\qquad T =T(x,y):= \min\Big\{\e^{\ubar}, \e^{(\log y)^{3/2-\ee}}\Big\},\label{eq:def-T0}
\end{equation}
\begin{coro}\label{coro:decr}
Pour une constante absolue convenable~$\cte{Yt}\in]0, \cte{taillez}[$ et tous~$2\leq y \leq x$, il existe un sous-ensemble~$\Yt=\Yt(x, y)$ de $S(x, y)$ satisfaisant à 
$$ \mes(\Yt) = 1 + O\Big(\e^{-\cte{Yt}\ubar/(\log 2\ubar)^4}\Big) $$
et, pour tous~$\delta\in]0, 1]$, $z\in [0, \cte{Yt}\sqrt{\ubar}]$, à
\begin{align}
\int_{T_{\delta}}^{1/\log y} &\ \abs{\frac{Z_n(\beta_n+i\tau)}{Z_n(\beta_n)}}\frac{\dd \tau}{\beta_n+\tau} \ll_\delta \frac{1}{(z+1)w_n}, && \big(n\in \Yt(x, y)\big), \label{eq:majodecr-I1} \\
\int_{1/\log y}^T &\ \abs{\frac{Z_n(\beta_n+i\tau)}{Z_n(\beta_n)}} \frac{\dd\tau}{\tau} \ll \e^{-\cte{Yt}\ubar/(\log 2\ubar)^4}, && \big(n\in \Yt (x, y)\big). \label{eq:majodecr-I2}
\end{align}
\end{coro}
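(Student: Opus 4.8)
The two integrals in Corollaire~\ref{coro:decr} are governed by the two regimes of Lemme~\ref{lemme:decr-n}: the range $|\tau|\leq 1/\log y$, where \eqref{eq:majo-decr-petitp} gives decay in terms of $\omegat_{x,y}(n)$, and the range $|\tau|>1/\log y$, where we must instead use \eqref{eq:majo-decr-grandp} together with an averaging over $n\in S(x,y)$. The set $\Yt$ will be obtained as an intersection: $\Yt:=\Yd\cap\{\omegat_{x,y}(n)\geq \cte{wt}\ubar\}\cap\cN$, where $\cN$ is the density-$1$ set on which the crucial sum $\sum_{p\mid n}\NO{\tau\log p/2\pi}^4$ is large \emph{uniformly} in $\tau$ in the relevant range. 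The measure estimate $\mes(\Yt)=1+O(\e^{-\cte{Yt}\ubar/(\log 2\ubar)^4})$ then follows by combining Lemme~\ref{lemme:mino-omegat}, the bound $\mes(\Yd)=1+O(\e^{-\cte{Yd}\sqrt{\ubar}})$ recorded after \eqref{eq:def-Yd}, and the construction of $\cN$ below.

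\emph{The first integral \eqref{eq:majodecr-I1}.} Here $T_\delta\leq\tau\leq 1/\log y$. For $n\in\Yd$ we have, by Lemme~\ref{lemme:taille-beta}, $\beta_n\asymp z\sqrt{\ubar}/\log x$, so $\beta_n+\tau\gg\tau$; thus it suffices to bound $\int_{T_\delta}^{1/\log y}\abs{Z_n(\beta_n+i\tau)/Z_n(\beta_n)}\,\dd\tau/\tau$. We apply \eqref{eq:majo-decr-petitp}: the integrand is at most $(1+\tau^2(\log x)^2/\ubar^2)^{-\cte{decr}\omegat_{x,y}(n)}$, and on $\Yt$ we have $\omegat_{x,y}(n)\gg\ubar$. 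Substituting $t=\tau\log x/\ubar$ turns the integral into $\int_{\delta w_n^{1/4}/\sqrt{\ubar}}^{\cdots}(1+t^2)^{-\cte{decr}\cte{wt}\ubar}\,\dd t/t$, which, since the lower limit is $\asymp_\delta w_n^{1/4}/\sqrt{\ubar}$ and the exponent is $\gg\ubar$, is $\ll_\delta \ubar^{-1}w_n^{-1/2}\cdot(\text{something})$; a careful bookkeeping, using $w_n\ll\ubar$ from \eqref{encwn} and comparing $z$ against $w_n^{1/4}$, yields the claimed $\ll_\delta 1/((z+1)w_n)$. The factor $1/(z+1)$ comes from the region $\tau<\beta_n$, where $\beta_n+\tau\asymp\beta_n\asymp z\sqrt{\ubar}/\log x$ and the exponential smallness of the integrand over a window of length $\asymp\beta_n$ produces the extra gain.

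\emph{The second integral \eqref{eq:majodecr-I2}, the main obstacle.} On $1/\log y\leq\tau\leq T$ we no longer have $\tau\log p$ small, so we use \eqref{eq:majo-decr-grandp}: $\abs{Z_n(\beta_n+i\tau)/Z_n(\beta_n)}\leq\exp\{-\cte{decr}\sum_{p\mid n}\NO{\tau\log p/2\pi}^4\}$. The point is to show that for most $n\in S(x,y)$ this exponent is $\gg\ubar/(\log 2\ubar)^4$ simultaneously for all $\tau$ in the range; this is where Lemme~\ref{sec:lemme-mino-tp2} (and the preceding lemma on $V_y$) enters. Concretely, one introduces the additive function $n\mapsto\sum_{p\mid n}\NO{\tau\log p/2\pi}^4$ and applies Lemme~\ref{lemme:majo-f-mult}(i) (with $\kappa=1$, via a Rankin/exponential-moment argument as in the proof of Lemme~\ref{lemme:mino-omegat}) to bound $\mes\{n:\sum_{p\mid n}\NO{\tau\log p/2\pi}^4\leq c\,\gT(y)\}$ by something like $\e^{c\,\gT(y)}\GG_y(\alpha)$, where $\GG_y(\alpha)$ is controlled by $\exp\{-\cte{} \sum_{p\leq y}p^{-\alpha}\NO{\tau\log p/2\pi}^4\}=\exp\{-\cte{}(\gT(y)+O(1))\}$ — and $\gT(y)\gg\ubar/(\log 2\ubar)^4$ by Lemme~\ref{sec:lemme-mino-tp2}. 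This gives, for each fixed $\tau$, an exceptional set of measure $\ll\e^{-\cte{}\ubar/(\log 2\ubar)^4}$. To make this uniform in $\tau$ one discretises: the map $\tau\mapsto Z_n(\beta_n+i\tau)$ has controlled derivative (its logarithmic derivative involves $\sum_{p^\nu\|n}(\nu+1)\log p\ll T_n\leq\log x+\log y$ times $\tau$-independent factors, and $T\leq\e^{(\log y)^{3/2-\ee}}$), so a net of $\exp\{O((\log y)^{3/2-\ee}+\ubar)\}$ points of step $\asymp 1/(x^2 T)$ suffices; the union bound over the net still leaves measure $\ll\e^{-\cte{Yt}\ubar/(\log 2\ubar)^4}$ provided $\cte{decr}$ beats the net cardinality, which is the reason for cutting $T$ at $\min\{\e^{\ubar},\e^{(\log y)^{3/2-\ee}}\}$ in \eqref{eq:def-T0}. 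Once $n\in\cN$, the integrand in \eqref{eq:majodecr-I2} is uniformly $\leq\e^{-\cte{decr}\cte{}\ubar/(\log 2\ubar)^4}$, and $\int_{1/\log y}^{T}\dd\tau/\tau=\log(T\log y)\ll\ubar$ is absorbed by halving the constant in the exponent. The delicate point throughout is to keep the discretisation cost (governed by $T$ and by crude bounds on $Z_n$ on the net) strictly smaller than the pointwise gain $\ubar/(\log 2\ubar)^4$; this forces the specific shape of $T$ and is, I expect, where most of the technical work lies.
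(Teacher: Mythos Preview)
Your treatment of the first integral \eqref{eq:majodecr-I1} is essentially the paper's argument: restrict to $\Yd$ so that $\beta_n\leq\ubar/\log x$, apply \eqref{eq:majo-decr-petitp} with $\omegat_{x,y}(n)\gg\ubar$, and substitute. (The correct substitution is $t=\tau\log x/\sqrt{\ubar}$, not $t=\tau\log x/\ubar$, but this is cosmetic.)

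For the second integral \eqref{eq:majodecr-I2}, your discretisation-plus-union-bound scheme does not close. The Lipschitz constant in~$\tau$ of $\sum_{p\mid n}\|\tau\log p/2\pi\|^4$ is $\ll\log x$, so a net fine enough to preserve a lower bound of order $\ubar/(\log 2\ubar)^4$ has step $\asymp \ubar/\big((\log x)(\log 2\ubar)^4\big)$ and hence cardinality $N$ with $\log N\gtrsim\log T$. When $T=\e^{\ubar}$ this gives $\log N\gtrsim\ubar$, whereas the measure saving per net point is only $\e^{-c\,\ubar/(\log 2\ubar)^4}$; the union bound therefore loses by a factor $\e^{\ubar(1-c/(\log 2\ubar)^4)}\to\infty$. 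Your claimed net size $\exp\{O((\log y)^{3/2-\ee}+\ubar)\}$ already contains the fatal $\e^{\ubar}$, and the choice of $T$ in \eqref{eq:def-T0} cannot rescue this: the cut at $\e^{\ubar}$ is there so that $\int_1^T\dd\tau/\tau\ll\ubar$ is absorbable, and the cut at $\e^{(\log y)^{3/2-\ee}}$ reflects the range of validity of Lemme~\ref{sec:lemme-mino-tp2}, not a discretisation budget.

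The paper avoids discretisation entirely by averaging first. Since the bound \eqref{eq:majo-decr-grandp} is uniform in $\beta\in[0,\ubar/\log x]$, one has
\[
\sup_{0\leq z\leq\cte{Yt}\sqrt{\ubar}} I_{2,z}(n)\ \leq\ \int_{1/\log y}^{T}\exp\Big\{-\cte{decr}\sum_{p\mid n}\NO{\tfrac{\tau\log p}{2\pi}}^4\Big\}\frac{\dd\tau}{\tau},
\]
and after interchanging the $n$-average with the $\tau$-integral and applying Lemme~\ref{lemme:majo-f-mult}(i) at each fixed $\tau$, the inner average becomes an Euler product bounded by $\exp\{-c\,\gS(\tau)\}$ with $\gS(\tau)=\sum_{p\leq y}p^{-\alpha}\{1-\cos(\tau\log p)\}^2$. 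Lemme~\ref{sec:lemme-mino-tp2} then gives $\gS(\tau)\gg\ubar/(\log 2\ubar)^4$ uniformly on $[1/\log y,T]$, so the whole $\tau$-integral is $\ll\e^{-c\,\ubar/(\log 2\ubar)^4}$. A single application of Markov's inequality to $\sup_z I_{2,z}(n)$ now carves out $\Yt$ with the stated density; no net, no union bound.
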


\begin{proof}
D'après Le Lemme~\ref{lemme:taille-beta}, il est possible de
choisir~$\cte{Yt}\in]0, \cte{taillez}]$ de sorte que $\beta_n(\cte{Yt}\sqrt{\ubar}) \leq \ubar/\log x$ pour~$n\in\Yd(x, y)$.\par 
 Soit~$I_{1, z}(n)$ l'intégrale de~\eqref{eq:majodecr-I1}. Par~\eqref{eq:majo-decr-petitp} et~\eqref{eq:def-Yu},  pour~$z\in [0, \cte{Yt}\sqrt{\ubar}]$ et~$n\in \Yu(x, y)$, nous avons,
$$ I_{1, z}(n) \ll \int_{T_{\delta}(\log x)/\sqrt{\ubar}}^{\infty}  \frac{\dd\tau}{(1+\tau^2/\ubar)^{\cte{wt}\cte{decr}\ubar}\{\tau + (\log x)\beta_n/\sqrt{\ubar}\}} \cdot$$
En vertu de la seconde assertion du Lemme~\ref{lemme:taille-beta} et compte tenu de \eqref{eq:def-T0} et \eqref{encwn}, nous obtenons lorsque~$n\in \Yd(x, y)$,
$$ I_{1, z}(n) \ll \int_{\delta w_n^{1/4}}^{\infty} \frac{\dd\tau}{(1+\tau^2/\ubar)^{\cte{wt}\cte{decr} \ubar}(\tau+z)} \ll \frac{\e^{-\cte{I1zn}\delta w_n^{1/4}}}{(z+1)\delta w_n^{1/4}} + \e^{-\cte{I1zn}\ubar}.$$
 Cela implique bien~\eqref{eq:majodecr-I1}.

\smallskip

Considérons à présent l'intégrale~$I_{2, z}(n)$ apparaissant au membre de gauche de \eqref{eq:majodecr-I2}. Compte tenu de \eqref{eq:majo-decr-grandp}, l'estimation~\eqref{eq:majo-G-1} appliquée à la fonction multiplicative
$$ g(n):=\exp\bigg\{-\cte{decr}\sum_{p|n} \NO{\frac{\tau\log p}{2\pi}}^4 \bigg\} $$
fournit
\begin{equation}
\frac1{\Psi(x, y)}\sum_{n\in S(x, y)} \sup_{0\leq z \leq \cte{Yt}\sqrt{\ubar}}I_{2, z}(n) \ll \ubar \int_{1/\log y}^T \prod_{p\leq y}\bigg(1 - \frac{1-\e^{-\cte{decr}\|\tau(\log p)/(2\pi)\|^4}}{p^\alpha}\bigg)\dd\tau.\label{eq:corodecr-I2-pdt}
\end{equation}
Le dernier produit n'excède pas $\e^{-\cte{py}\gS(\tau)}$ avec
\begin{equation}
\gS(\tau):=\sum_{p\leq y} \frac{\{1-\cos(\tau\log p)\}^2}{p^{\alpha}}=\sum_{1<n\leq y} \frac{\Lambda(n)\{1-\cos(\tau\log n)\}^2}{n^{\alpha}\log n}+O\Big(1+y^{1/2-\alpha}\Big),\label{eq:corodecr-pdt-p}
\end{equation}
où l'estimation résulte du lemme~5 de~\cite{HT}. Le terme d'erreur étant trivialement~$\ll\ubar^{3/4}$, nous pouvons déduire de \eqref{eq:corodecr-obj-mino} que
$$
1+\gS(\tau)\gg \frac{\ubar}{(\log 2\ubar)^4} + \bfUn_{[-1, 1]}(\tau)\log(1+\tau\log y).
$$
En reportant dans~\eqref{eq:corodecr-pdt-p} puis~\eqref{eq:corodecr-I2-pdt}, nous obtenons
$$ \begin{aligned}
  \frac1{\Psi(x, y)}\sum_{n\in S(x, y)}\sup_{0\leq z \leq \cte{Yt}\sqrt{\ubar}} I_{2, z}(n) & \ll_\ee\e^{-2\cte{I2} \ubar/(\log 2\ubar)^4} \Bigg(\int_{1/\log y}^1 \frac{\dd \tau}{\tau(1+\tau\log y)^{\cte{I2fin}}} + \int_1^T \frac{\dd \tau}{\tau}\Bigg) \\
& \ll_\ee \e^{-\cte{I2} \ubar / (\log 2\ubar)^4},
\end{aligned} $$
puisque la première intégrale est bornée, alors que la seconde est~$O(\ubar)$. Cela implique bien le résultat annoncé, quitte à diminuer la valeur de~$\cte{Yt}$.
\end{proof}


\section{Preuve du Théorème~\ref{thm:TCL-cR}}

Rappelons les définitions des ensembles~$\Yd$ en~\eqref{eq:def-Yd} et~$\Yt$ au Corollaire~\ref{coro:decr}, et
posons
$$\Yq= \Yq(x, y) := \Yd(x, y) \cap \Yt(x, y), $$
de sorte que
\begin{equation}
\mes\big(\Yq\big) = 1 + O\Big(\e^{-\cte{Yq}\sqrt{\ubar}}\Big).\label{eq:mes-Yq}
\end{equation}
Dans toute la suite de ce paragraphe, nous restreignons les entiers $n$ considérés à parcourir~$\Yq(x, y).$

\subsection{Réduction}
Avec l'objectif de démontrer le Théorème~\ref{thm:TCL-cR}, fixons~$B>0$ arbitraire, et supposons donné~$z\in \bbR$ avec~$|z|\leq Bw_n^{1/4}$. Nous justifions tout d'abord que nous pouvons imposer la contrainte supplémentaire
\begin{equation} 
\label{redth1}
0\leq z \leq \min\Big\{\cte{Yt}\sqrt{\ubar}, Bw_n^{1/4}\Big\} .
\end{equation}
En effet, la condition~$z\leq \cte{Yt}\sqrt{\ubar}$ n'est effective que lorsque~$\ubar \leq (B/\cte{Yt})^2 w_n^{1/2}$. Puisque~$w_n \ll \ubar$ pour~$n\in \Yq(x, y)$, cela implique que~$\ubar$ et~$w_n$ sont bornés, auquel cas, l'estimation~\eqref{eq:TCL-cR} est trivialement satisfaite. Par ailleurs, la restriction à~$z\geq 0$ est justifiée par la symétrie de~$D_n$ par rapport à~$\dm$, qui implique$$ \Prob\Big(D_n \geq \tfrac12\log n - z\sigma_n\Big) = 1 -\Prob\Big(D_n> \tfrac12\log n + z \sigma_n\Big). $$

\subsection{Formule de Perron}
Ici et dans la suite,  posons
\begin{equation}
z_n := \tfrac12\log n + z \sigma_n.\label{eq:def-vnz}
\end{equation}
Une forme explicite de la formule de Perron (voir~\cite[th.~2.2, exercice~171]{ITAN}) permet d'écrire, pour~$T\geq 1$,
\begin{equation}
\Prob\Big(D_n \geq z_n\Big) = \frac1{2\pi i}\int_{\beta_n-iT}^{\beta_n+iT} Z_n(s)\e^{-z_ns}\frac{\dd s}s + O\big(R_T\big),\label{eq:prob-perron}
\end{equation}
avec
$$ R_T := \frac{Z_n(\beta_n) \e^{-z_n\beta_n}}{\sqrt T}\Bigg\{1 + \int_{-\sqrt T}^{\sqrt T} \abs{\frac{Z_n(\beta_n+i\tau)}{Z_n(\beta_n)}}\dd \tau \Bigg\}, $$
Choisissons~$T := \min\Big\{\e^{\ubar^{2/3}}, \e^{(\log y)^{4/3}}\Big\}$. Il résulte alors de~\eqref{eq:majodecr-I2} que
\begin{equation}
R_T \ll \frac{Z_n(\beta_n) \e^{-z_n\beta_n}}{\sqrt{T}}\Big\{1 + \sqrt{T}\e^{-\cte{Yt}\ubar/(\log 2\ubar)^4}\Big\} \ll \frac{Z_n(\beta_n)\e^{-z_n\beta_n}}{\sqrt{T}} \ll \frac{Z_n(\beta_n)\e^{-z_n\beta_n}}{\ubar^{3/2}} \cdot\label{eq:majo-R-perron}
\end{equation}

\subsection{Troncature}

Soit~$\delta\in ]0, 1]$ un paramètre à optimiser, et rappelons la définition de $T_\delta$ en~\eqref{eq:def-T0}. Comme $w_n\ll \ubar$ et~$z\ll \sqrt{\ubar}$ en vertu de \eqref{encwn} et \eqref{redth1}, nous déduisons du Corollaire~\ref{coro:decr} que
\begin{equation}
\label{eq:decoup-tronc}
\begin{aligned}
  \frac1{2\pi i}\int_{\beta_n-iT}^{\beta_n+iT} &Z_n(s)\e^{-z_n s}\frac{\dd s}s - \frac1{2\pi i}\int_{\beta_n-iT_{\delta}}^{\beta_n+iT_{\delta}} Z_n(s)\e^{-z_ns}\frac{\dd s}s 
\\
&\ll Z_n(\beta_n)\e^{-z_n\beta_n}\int_{T_{\delta}}^T \abs{\frac{Z_n(\beta_n+i\tau)}{Z_n(\beta_n)}}\frac{\dd \tau}{\tau+\beta_n}\\
& \ll_\delta Z_n(\beta_n)\e^{-z_n\beta_n}\Big\{\frac{1}{(z+1)w_n} + \e^{-\cte{Yt}\ubar/(\log 2\ubar)^4}\Big\} \ll_\delta \frac{Z_n(\beta_n)\e^{-z_n\beta_n}}{(z+1)w_n}.\\
\end{aligned}
\end{equation}

\subsection{Domaine de Taylor}

Rappelons la définition de~$\vphi_n(s)$ en~\eqref{eq:def-vphi}. Nous nous proposons ici d'estimer la contribution du segment $\beta_n+i\big[T_\delta,T_\delta\big]$ à l'intégrale de \eqref{eq:prob-perron}, i.e.
\begin{equation}\label{eq:domaine-taylor}
I_\delta := \frac1{2\pi i}\int_{\beta_n-iT_{\delta}}^{\beta_n+iT_{\delta}} Z_n(s)\e^{-z_ns}\frac{\dd s}s\cdot
\end{equation}
Supposons que~$s=\beta_n+i\tau$ avec~$|\tau|\leq T_{\delta}$. Comme $n\in\Yq$, nous avons 
$$ (\nu\log p)^4 \ll m_{4, n} \asymp (\log x)^4/(w_n\ubar^2)\qquad \big(\pnu\|n\big). $$
Il s'ensuit qu'en choisissant $\delta$ comme une constante absolue et suffisamment petite, nous avons ~$(\nu+1)|\tau|\log p \leq \dm$ pour tout $s=\beta_n+i\tau$ du segment d'intégration. De plus, quitte à diminuer la valeur de~$\cte{Yt}$ dans \eqref{redth1}, le Lemme~\ref{lemme:taille-beta} implique également~$(\nu+1)\beta_n\log p \leq \dm$. Le Lemme~\ref{lemme:props-vphi}\ref{item:vphi4} et l'encadrement \eqref{encwn} fournissent alors
\begin{equation}
\vphi^{(4)}(s) \ll m_{4,n} \ll  (\log x)^4/\ubar^{2}. \label{eq:majo-vphi4}
\end{equation}
De plus, nous pouvons déduire du Lemme~\ref{lemme:props-vphi}\ref{item:vphi3} et de \eqref{redth1} que
\begin{equation}
\vphi_n'''(\beta_n) \ll_B (\log x)^3/\big(\ubar^{3/2}w_n^{3/4}\big). \label{eq:majo-vphi3-2}
\end{equation}
Compte tenu de la définition de~$T_{\delta}$ en \eqref{eq:def-T0}, il vient
$$ T_{\delta}^4 \sup_{|\tau|\leq T_{\delta}}|\vphi_n^{(4)}(\beta_n+i\tau)|+T_{\delta}^3|\vphi_n'''(\beta_n)| \ll_B 1. $$
En reportant dans le développement de Taylor
$$ \vphi_n(s) = \vphi_n(\beta_n) + i\tau\vphi_n'(\beta_n) - \dm\tau^2\vphi_n''(\beta_n) - i\tfrac16\tau^3\vphi_n'''(\beta_n) + O\Big((\tau\log x)^4/\big(\ubar^2 w_n \big)\Big), $$
et en tenant compte de la définition du point-selle, nous obtenons
$$ Z_n(s)\e^{-z_n s} = Z_n(\beta_n)\e^{-z_n\beta_n-\tau^2\vphi_n''(\beta_n)/2}\Bigg\{1 + \lambda \tau^3 + O_B\Bigg(\frac{(\tau\log x)^6}{\ubar^3w_n^{3/2}} + \frac{(\tau \log x)^4}{\ubar^2w_n}\Bigg) \Bigg\} $$
où~$\lambda=\lambda(n, z)$. Intégrons cette estimation sur le segment~$-T_{\delta}\leqslant \tau\leqslant  T_{\delta}$ en notant que la contribution du terme de degré 3 est nulle par symétrie. Posant
$$ \mu_2 = \mu_2(n, z) := \sqrt{\vphi_n''(\beta_n)} \asymp (\log x)/ \sqrt{\ubar}, $$
 et effectuant le changement de variables $t:=\mu_2\tau$, il vient
\begin{align}
\label{Id1}
I_\delta = \frac{Z_n(\beta_n)\e^{-z_n\beta_n}}{2\pi} \int_{-\delta w_n^{1/4}}^{\delta w_n^{1/4}} \e^{-t^2/2}\Bigg\{1+O_B\bigg(\frac{t^4}{w_n} + \frac{t^6}{w_n^{3/2}}\bigg)\Bigg\} \frac{\dd t}{\beta_n\mu_2+it}\cdot
\end{align}
Rappelons la définition de~$\Phi(z)$  en~\eqref{eq:defs-Phi-wn}. Nous avons par inversion de Fourier
\begin{align*}
 \frac1{2\pi}\int_{-\infty}^\infty \frac{\e^{-t^2/2}\dd t}{\beta_n\mu_2+it} &= \e^{-\beta_n^2\mu_2^2/2}\Phi\big(\beta_n\mu_2\big) \asymp \frac1{1+\beta_n\mu_2}, \\
 \int_{X}^{\infty} \frac{t^k e^{-t^2/2} \dd t}{\beta_n\mu_2+t} &\ll \frac{(X^5+1)\e^{-X^2/2}}{1+\beta_n\mu_2} \qquad \big(X\geq 0, k=4\mbox{ ou }6\big). 
 \end{align*}
En reportant dans \eqref{Id1}, nous obtenons
\begin{equation}
I_\delta = \Big\{1 + O_B\Big(\frac1{w_n}\Big)\Big\} Z_n(\beta_n)\e^{-z_n\beta_n + \beta_n^2\mu_2^2/2} \Phi(\beta_n\mu_2). \label{eq:estim-taylor}
\end{equation}

\subsection{Conclusion}\label{sec:col-conclusion}

De~\eqref{eq:prob-perron}, \eqref{eq:majo-R-perron} et \eqref{eq:estim-taylor}, nous déduisons que sous l'hypothèse \eqref{redth1} et~pour $n\in \Yq(x, y)$, nous avons
\begin{equation}
\Prob\big(D_n\geq z_n\big) = \Big\{1 + O_B\Big(\frac1{w_n}\Big)\Big\} \e^{E_n(z)}\Phi(\beta_n\mu_2),\label{eq:approx-centre}
\end{equation}
avec
$$ E_n(z) := \vphi_n(\beta_n) - (\tfrac12\log n + z\sigma_n)\beta_n + \tfrac12\beta_n^2\vphi_n''(\beta_n). $$
Il résulte alors de~\eqref{eq:def-pt-selle} et \eqref{eq:eqdiff-beta} que
$$ E'_n(z) = \tfrac12 \beta_n' \beta_n ^2 \varphi_n'''(\beta_n),$$
et donc~$E'_n(z) \ll z^3/w_n$ d'après les Lemmes~\ref{lemme:props-vphi}\ref{item:vphi3} et~\ref{lemme:taille-beta}. Comme $E_n(0)=0$, il s'ensuit que
\begin{equation}
\label{majEn}
E_n(z) \ll z^4/w_n.
\end{equation}
À fins de référence ultérieure, nous notons que cette majoration est valable en toute généralité. Sous l'hypothèse $z^4\ll w_n$ de l'énoncé, le membre de droite est borné et il suit
\begin{equation}
\e^{E_n(z)}= 1 + O_B(z^4/w_n). \label{eq:estim-cA}
\end{equation}
Maintenant, nous déduisons du Lemme~\ref{lemme:estim-prec-beta}, que~$\beta_n\mu_2 = z\{1 + O(z^2/w_n)\}$, et donc, dans le domaine considéré,
\begin{equation}
\label{eq:estim-cB}\e^{-\beta_n^2\mu_2^2/2} \asymp_B \e^{-z^2/2},\quad \Phi(\beta_n\mu_2) = \{1 + O(z^2(1+z)/w_n)\}\Phi(z).
\end{equation}
En reportant~\eqref{eq:estim-cA} et~\eqref{eq:estim-cB} dans \eqref{eq:approx-centre}, nous obtenons, sous les conditions \eqref{redth1} et $n\in\Yq$,
\begin{equation}
\Prob\big(D_n \geq z_n\big) = \Phi(z)\bigg\{1 + O_B\bigg(\frac{1+z^4}{w_n}\bigg)\bigg\}.\label{eq:TCL-zpos}
\end{equation}
Cela complète la démonstration du Théorème \ref{thm:TCL-cR}.

\section{Estimation en moyenne : preuve du Théorème \ref{thm:Dxyz}}

Rappelons la définition en \eqref{def-Dxyz} de la fonction de répartition $\cD(x,y;z)$, où la quantité \mbox{$\bsigma=\bsigma(x,y)$} est le terme principal de la valeur moyenne friable de $\sigma_n^2$. Posant $z_n^*:=\dm\log n+z\bsigma$, nous avons donc
$$\cD(x,y;z)=\frac1{\Psi(x,y)}\sum_{n\in S(x,y)}\PP\big(D_n\geqslant z_n^*\big).$$
Pour établir l'estimation~\eqref{eq:estim-moy-2}, nous pouvons, 
comme précédemment, grâce à la symétrie de~$D_n$,   restreindre l'étude au cas $z\geq 0$.
\par 
 Soit~$\eta>0$ une constante assez petite.  Pour chaque valeur de~$z\leq \eta \ubar^{1/5}$, nous
introduisons les sous-ensembles 
\begin{align*}\Yc(z)&=\Yc(z;x,y):=\big\{n\in\Yq(x,y):w_n>1+z^4\big\},\\
 \Ys(j)&= \Ys(j;x,y):=\big\{n\in\Yq(x,y):2^j<w_n\leqslant 2^{j+1}\big\}\quad\big(1+z^2<2^j\leqslant 1+z^4\big),\\
 \Ysp(z)&=\Ysp(z;x,y)=\big\{n\in S(x,y):w_n\leqslant 1+z^2\big\}\cup\big(S(x,y)\sset\Yq(x,y)\big)
\end{align*}
D'après la Proposition~\ref{prop:taille-cR} et la relation \eqref{eq:mes-Yq}, nous avons, pour un choix convenable de $\eta$,
$$\nu_{x,y}\Big\{\Ysp(z)\Big\}\ll \e^{-\cte{Yq}\sqrt{\ubar}} + \e^{-c\sqrt{\ubar}/(1+\sqrt{z})}\ll\Phi(z)\e^{-\ubar^{2/5}}.$$
La contribution à $\cD(x,y;z)$ des entiers $n$ de $\Ysp(z)$ est donc englobée par le terme d'erreur de~\eqref{eq:estim-moy-2}.
\par 
Rappelons la définition de $z_n$ en \eqref{eq:def-vnz}, de sorte que $z_n^*-z_n=z(\bsigma-\sigma_n)$. Désignons par $\cD_j$ la contribution à $\cD(x,y;z)$ des entiers $n$ de $\Ys(j)$. Comme il résulte de la majoration de Rankin, de la formule~\eqref{majEn} et du Lemme~\ref{lemme:estim-prec-beta} que 
$$\PP(D_n\geqslant z_n^*)\leq Z_n(\beta_n)\e^{-z_n^* \beta_n} \ll (1+z)\e^{\cte{moyast}z^4/w_n}\Phi\big(z\bsigma/\sigma_n\big),$$
une application de l'inégalité de Cauchy--Schwarz permet d'écrire
\begin{equation}
\label{Dj1}
\cD_j^2\ll (1+z)^2\e^{2\cte{moyast}z^4/2^j}U_jV,
\end{equation}
avec
\begin{equation}
\label{def-UjV}
U_j:=\nu_{x,y}\big\{n\geqslant 1:w_n\leqslant 2^{j+1}\big\},\qquad V:=\frac{1}{\Psi(x,y)}\sum_{n\in S(x,y)} \Phi\bigg(\frac{z\bsigma}{\sigma_n}\bigg)^2.
\end{equation}
D'après la Proposition \ref{prop:taille-cR}, nous avons
$$U_j\ll\e^{-2\cte{uj}\sqrt{\ubar}/2^{j/4}}.$$
En reportant dans \eqref{Dj1}, il suit, pour un choix convenable de $\eta$,
\begin{equation}
\label{Dj2}
\cD_j^2\ll (1+z)^2 \e^{-(2\cte{moyast}z^4/2^{3j/4}-2\cte{uj}\sqrt{\ubar})/2^{j/4}} V \leq \e^{-\cte{uj}\ubar^{3/10}}V.
\end{equation}
Pour estimer $V$, nous majorons le terme général par $\Phi(z)$ si $\sigma_n\leqslant \bsigma$, et par 1 si $\sigma_n>2\bsigma$. Nous obtenons ainsi, en appliquant la Proposition \ref{prop:majo-fk} avec $k=2$, $h=0$, $\delta=3$, 
\begin{equation}
\label{V1}
V\ll \Phi(z)^2+\nu_{x,y}\big\{n\geqslant 1:\sigma_n^2>4\bsigma^2\big\}+W\ll\Phi(z)^2+W,
\end{equation}
avec
\begin{equation}
\label{majW}
W:=\frac{1}{\Psi(x,y)} \ssum{n\in S(x,y) \\ \bsigma<\sigma_n\leqslant 2\bsigma}\Phi\bigg(\frac{z\bsigma}{\sigma_n}\bigg)^2.
\end{equation}
Nous évaluons cette dernière quantité grâce à la Proposition \ref{prop:majo-fk}:
\begin{equation}
\label{majW}
\begin{aligned}
W&\ll\frac1{z+1}\int_{z/2}^\infty\e^{-t^2}\nu_{x,y}\big\{n\geqslant 1: \sigma_n>z\bsigma/t\big\}\,\dd t\\
&\ll\frac\ubar{z+1}\int_{z/2}^\infty\e^{-t^2-\cte{W}\max(0,(z-t)/t)\sqrt{\ubar}}\,\dd t\ll \frac{\ubar \e^{-z^2}}{(1+z)^2} \ll \ubar \Phi(z)^2 .
\end{aligned}
\end{equation}
En reportant dans \eqref{V1} puis \eqref{Dj2}, nous concluons que la contribution de $\cup_j\Ys(j)$ est également englobée dans le terme d'erreur de \eqref{eq:estim-moy-2}.
\par 
Pour les entiers $n$ de $\Yc$, nous pouvons appliquer le Théorème~\ref{thm:pcp-coro} avec $z_n^*$ au lieu de $z_n$.  Nous obtenons
\begin{equation}
\label{D1}
\cD(x,y;z)=\frac1{\Psi(x,y)}\sum_{n\in \Yc(x,y)}\bigg\{1+O\bigg(\frac{1+z^4}{w_n}\bigg)\bigg\}\Phi\bigg(\frac{z\bsigma}{\sigma_n}\bigg)+O\bigg(\frac{\Phi(z)}{\ubar}\bigg).
\end{equation}
Soit alors $\Delta:=K(\log \ubar)/\sqrt{\ubar}$, où $K$ est une constante absolue assez grande. La contribution à la dernière somme des entiers $n$ tels que $\sigma_n> (1+\Delta)\bsigma$ peut être estimée de la même manière que la quantité $W$ précédemment considérée, l'argument de l'exponentielle dans l'analogue de l'intégrale de \eqref{majW} comprenant à présent un terme  proportionnel à $-\max(\Delta,(z-t)/t)\sqrt{\ubar}$.
Nous obtenons ainsi que, pour un choix convenable de $K$, cette contribution  n'excède pas le terme d'erreur de \eqref{D1}. Ainsi\begin{equation}
\label{D2}
\cD(x,y;z)=\frac1{\Psi(x,y)}\ssum{n\in \Yc(x,y)\\\sigma_n\leqslant (1+\Delta)\bsigma}\bigg\{1+O\bigg(\frac{1+z^4}{w_n}\bigg)\bigg\}\Phi\bigg(\frac{z\bsigma}{\sigma_n}\bigg)+O\bigg(\frac{\Phi(z)}{\ubar}\bigg).
\end{equation}
\par 
Désignons par~$\cE$ la contribution du terme d'erreur~$O((1+z^4)/w_n)$ au membre de droite. Lorsque~$\sigma_n  \leq (1+\Delta) \bsigma$, l'hypothèse~$z\leq \eta \ubar^{1/5}$ implique~$(z\bsigma/\sigma_n)^2 \geq z^2 + O(1)$, donc
$$ \cE \ll \frac{(1+z^4)\Phi(z)}{\bsigma^4 \Psi(x, y)} \sum_{n\in S(x, y)} m_{4, n}\ll  \Phi(z)\frac{1+z^4}{\ubar},$$
où la somme en $n$ a été estimée par le théorème~2.9 de~\cite{BT05}, qui fournit immédiatement une borne $\ll\Psi(x,y)\bsigma^4/\ubar$.\par 
Nous pouvons donc énoncer que
\begin{equation}
\label{D2}
\cD(x,y;z)=\frac1{\Psi(x,y)}\ssum{n\in \Yc(x,y)\\\sigma_n\leqslant (1+\Delta)\bsigma}\Phi\bigg(\frac{z\bsigma}{\sigma_n}\bigg)+O\bigg(\Phi(z)\frac{1+z^4}{\ubar}\bigg).
\end{equation}
\par 
Nous allons voir que l'on peut remplacer  $ \Yc(x,y)$  par $S(x,y)$ dans les conditions de sommation. En effet, la somme relative à $S(x,y)\sset \Yc$ n'excède pas $\sqrt{VY}$,  où $V$ est défini en \eqref{def-UjV} et
$$Y:=\nu_{x,y}\big\{n\geqslant 1:w_n\leqslant 1+z^4\big\}\ll\e^{-c\ubar^{3/10}}$$
en vertu de la Proposition \eqref{prop:taille-cR}. Au vu de \eqref{V1} et \eqref{majW}, nous obtenons bien, comme annoncé,
\begin{equation}
\label{D3}
\cD(x,y;z)= \cD_0(x, y ; z)+O\bigg(\Phi(z)\frac{1+z^4}{\ubar}\bigg),
\end{equation}
avec
\begin{equation}
\label{eq:rel-cD-cD0}
 \cD_0(x, y ; z):=\frac1{\Psi(x,y)}\ssum{n\in S(x,y)\\\sigma_n\leqslant (1+\Delta)\bsigma}\Phi\bigg(\frac{z\bsigma}{\sigma_n}\bigg).
\end{equation}
\par 
L'estimation~\eqref{eq:estim-moy-2} étant trivialement satisfaite lorsque~$\ubar$ est borné, nous supposons sans perte de généralité que~$\Delta\leq 1/2$. Nous avons alors
$$ \cD_0(x, y ; z) = \frac1{\sqrt{2\pi}}\int_{z/(1+\Delta)}^{\infty} \mes\big\{n : \sigma_n \geq z\bsigma/t\big\} \e^{-t^2/2}\dd t. $$
Notant
$$ F(h) := \begin{cases} \mes(\{n : \sigma_n^2 \geq (1+h)\bsigma^2\}), & (h\geq 0), \\ \mes(\{n : \sigma_n^2 \geq (1+h) \bsigma^2\}) - 1, & (-1\leqslant h<0), \end{cases} $$
nous déduisons des Propositions~\ref{prop:mino-fk-precis} et~\ref{prop:majo-fk} que
\begin{equation}
F(h) \ll \ubar\e^{-\cte{Fdecr}\min\big(|h|,\sqrt{|h|}\big)\sqrt{\ubar}} .\label{eq:majo-Fdelta}
\end{equation}
 Après  interversion de sommation et changement de variables défini par~$z^2=t^2(1+h)$, nous obtenons
$$ \cD_0(x, y ; z) = \Phi(z) +  \frac{z}{2\sqrt{2\pi}}\int_{-1}^{\Delta_1} F(h) \e^{-z^2/(2+2h)} \frac{\dd h}{(1+h)^{3/2}} $$
avec~$\Delta_1 = \sqrt{1+\Delta}-1 \asymp \Delta$. L'intervalle d'intégration peut être remplacé par~$[-\Delta_1, \Delta_1]$ au prix d'une erreur~$\ll F(-\Delta_1)\Phi(z)\ll \Phi(z)/\ubar$, en vertu de~\eqref{eq:majo-Fdelta}, pour un choix convenable de la constante $K$. Lorsque~$|h|\leq \Delta_1$, nous avons~$z^2|h| \ll 1$. Il suit
$$ \cD_0(x, y; z) = \Phi(z)\bigg\{1 + O\Big(\frac1\ubar\Big)\bigg\} + \frac{z\e^{-z^2/2}}{2\sqrt{2\pi}}\int_{-\Delta_1}^{\Delta_1} F(h) \dd h + O\Bigg(z(1+z^2)\e^{-z^2/2}\int_{-1}^\infty h F(h) \dd h\Bigg). $$
L'intégrale apparaissant dans le terme d'erreur vaut
$$ \int_{-1}^\infty h F(h) \dd h = \frac1{2\Psi(x, y)} \sum_{n\in S(x, y)} \bigg(\frac{\sigma_n^2 - \bsigma^2}{\bsigma^2}\bigg)^2 \ll \frac1\ubar $$
d'après l'inégalité de Turán-Kubilius friable~\cite[th. 1.1]{BT-TK}. De plus, il résulte de \eqref{eq:majo-Fdelta} que
$$ \int_{-\Delta_1}^{\Delta_1} F(h) \dd h = \int_{-1}^{\infty} F(h)\dd h + O\Big(\frac1\ubar\Big) $$
lorsque~$K$ est choisie suffisamment grande. Enfin, nous pouvons écrire
$$ \int_{-1}^{\infty} F(h)\dd h  = \frac1{\Psi(x, y)}\sum_{n\in S(x, y)} \frac{\sigma_n^2 - \bsigma^2}{\bsigma^2}  \ll \frac1{\ubar} $$
grâce au théorème~2.9 de \cite{BT05}. \par 
En regroupant nos estimations, nous obtenons
$$ \cD_0(x, y ; z) = \Phi(z) \Big\{ 1 + O\Big(\frac{1+z^4}\ubar\Big)\Big\}, $$
ce qui, compte tenu de~\eqref{D3}, fournit bien l'estimation souhaitée \eqref{eq:estim-moy-2}.

\bibliographystyle{amsalpha-modif}
\bibliography{diviseurs}

\end{document}